\newtheorem{theorem}{Theorem}[section]
\newtheorem{lemma}[theorem]{Lemma}
\newtheorem{proposition}[theorem]{Proposition}
\begin{document}

\title{Non-abelian representations of the slim dense near hexagons on 81 and 243 points}
\author{B. De Bruyn, B. K. Sahoo and N. S. N. Sastry}
\maketitle

\begin{abstract}
We prove that the near hexagon $Q(5,2) \times \mathbb{L}_3$ has a non-abelian representation in the extra-special $2$-group  $2^{1+12}_+$ and that the near hexagon $Q(5,2) \otimes Q(5,2)$ has a non-abelian representation in the extra-special $2$-group  $2^{1+18}_-$. The description of the non-abelian representation of $Q(5,2) \otimes Q(5,2)$ makes use of a new combinatorial construction of this near hexagon.
\end{abstract}

\medskip \noindent {\small \textbf{Keywords:} near hexagon, non-abelian representation, extra-special 2-group\\ \textbf{MSC2000:} 05B25}

\section{Introduction} \label{sec1}

Let $\mathcal{S}=(P,L)$ be a partial linear space with point set $P$ and line set $L$. We suppose that $\mathcal{S}$ is {\em slim}, i.e., that every line of $\mathcal{S}$ is incident with precisely three points. For distinct points $x,y \in P$, we write $x \sim y$ if they are collinear. In that case, we denote by $xy$ the unique line containing $x$ and $y$ and define $x*y$ by $xy =\{x,y,x*y \}$. For $x \in P$, we define $x^{\perp} := \{ x \} \cup \{y \in P : y \sim x \}$. If $x,y \in P$, then $d(x,y)$ denotes the distance between $x$ and $y$ in the collinearity graph of $\mathcal{S}$.

A \textit{representation} \cite[p.525]{IPS} of $\mathcal{S}$ is a pair $(R,\psi)$, where $R$ is a group and $\psi$ is a mapping from $P$ to the set of involutions of $R$, satisfying:

(R1) $R$ is generated by the image of $\psi$;

(R2) $\psi$ is one-one on each line $\{ x,y,x*y \}$ of $\mathcal{S}$ and $\psi(x)\psi(y)=\psi(x*y)$.

\noindent Notice that if $x \sim y$, then $\psi(x)$ and $\psi(y)$ necessarily commute by condition (R2). The group $R$ is called a {\it representation group} of $\mathcal{S}$. A representation $(R,\psi)$ of $\mathcal{S}$ is \textit{faithful} if $\psi$ is injective and is \textit{abelian} or \textit{non-abelian} according as $R$ is abelian or not. Note that, in \cite{IPS}, `non-abelian representation' means that `the representation group is not necessarily abelian'. Abelian representations are called embeddings in the literature. For an abelian representation, the representation group is an elementary abelian 2-group and hence can be considered as a vector space over the field $\mathbb{F}_2$ with two elements. We refer to \cite{I} and \cite[Sections 1 and 2]{SS1} for more on representations of partial linear spaces with $p+1$ points per line, where $p$ is a prime.

A finite $2$-group $G$ is called \textit{extra-special} if its Frattini subgroup $\Phi(G)$, its commutator subgroup $G'=[G,G]$ and its center $Z(G)$ coincide and have order $2$. We refer to \cite[Section 20, pp.78--79]{DH} or \cite[Chapter 5, Section 5]{G} for the properties of extra-special 2-groups which we will mention now.  An extra-special $2$-group is of order $2^{1+2m}$ for some integer $m \geq 1$. Let $D_{8}$ and $Q_{8}$, respectively, denote the dihedral and the quaternion groups of order 8. A non-abelian $2$-group of order 8 is extra-special and is isomorphic to either $D_{8}$ or $Q_{8}$. If $G$ is an extra-special 2-group of order $2^{1+2m}$, $m \geq 1$, then the exponent of $G$ is 4 and either $G$ is a central product of $m$ copies of $D_{8}$, or $G$ is a central product of $m-1$ copies of $D_{8}$ and one copy of $Q_{8}$. If the former (respectively, latter) case occurs, then the extra-special 2-group is denoted by $2_{+}^{1+2m}$ (respectively, $2_{-}^{1+2m}$).

A partial linear space $\mathcal{S}=(P,L)$ is called a {\em near polygon} if for every point $p$ and every line $L$, there exists a unique point on $L$ nearest to $p$. If $d$ is the maximal distance between two points of $\mathcal{S}$, then the near polygon is also called a {\em near $2d$-gon}. A near polygon is called {\em dense} if every line is incident with at least three points and if every two points at distance 2 have at least two common neighbours. By \cite{BCHW}, there are up to isomorphism 11 slim dense near hexagons. The paper \cite{SS2} initiated the study of the non-abelian representations of these dense near hexagons.

Suppose $(R,\psi)$ is a non-abelian representation of a slim dense near hex\-ag\-on. Then by \cite[Proposition 4.1, p.205]{SS2}, $(R,\psi)$ necessarily is faithful and for $x,y \in P$, $[\psi(x),\psi(y)] \neq 1$ if and only if $x$ and $y$ are at maximal distance 3 from each other. If $\mathcal{S}$ is the (up to isomorphism) unique slim dense near hexagon on 81 points, which will be denoted by $Q(5,2) \times \mathbb{L}_3$ in the sequel, then it was shown in \cite[Theorem 1.6, p.199]{SS2} that $R$ necessarily is isomorphic to the extra-special 2-group $2^{1+12}_+$. If $\mathcal{S}$ is the (up to isomorphism) unique slim dense near hexagon on 243 points, which will be denoted by $Q(5,2) \otimes Q(5,2)$ in the sequel, then it was shown in \cite[Theorem 1.6, p.199]{SS2}, that $R$ necessarily is isomorphic to the extra-special 2-group $2^{1+18}_-$. The question whether such non-abelian representations exist remained however unanswered in \cite{SS2}. The following theorem, which is the main result of this paper, deals with these existence problems.

\begin{theorem} \label{theo1.1}
$(1)$ The slim dense near hexagon $Q(5,2) \times \mathbb{L}_3$ has a non-abelian representation in the extra-special $2$-group  $2^{1+12}_+$.

$(2)$ The slim dense near hexagon $Q(5,2) \otimes Q(5,2)$ has a non-abelian representation in the extra-special $2$-group  $2^{1+18}_-$.
\end{theorem}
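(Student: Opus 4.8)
The plan is to construct each representation explicitly and then verify axioms (R1), (R2) together with the necessary non-commuting relations (two points have non-commuting images exactly when they are at distance $3$), and finally identify the isomorphism type of the group that is generated.

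For part $(1)$: the near hexagon $Q(5,2) \times \mathbb{L}_3$ has its points naturally partitioned into three copies of the $27$ points of the generalized quadrangle $Q(5,2)$, with lines being either lines of a $Q(5,2)$-copy or ``$\mathbb{L}_3$-lines'' of the form $\{(x,1),(x,2),(x,3)\}$ for a fixed point $x$ of $Q(5,2)$. It is known that $Q(5,2)$ has a faithful projective embedding in $\mathrm{PG}(5,2)$, equivalently an abelian representation in $2^6$; I would lift this. Concretely, inside $2^{1+12}_+$ choose two commuting copies $E_1, E_2$ of the abelian group $2^6$ arising from the $Q(5,2)$-embedding (so that $E_1 E_2$ is a maximal elementary abelian subgroup, and the symplectic/quadratic form attached to $2^{1+12}_+$ restricts appropriately), and a central involution $z$. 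Define $\psi(x,1)=e_1(x)$, $\psi(x,3)=e_2(x)$, and $\psi(x,2)=z\, e_1(x)\, e_2(x)$, where $e_i$ is the embedding map into $E_i$. Then (R2) on a line inside a $Q(5,2)$-copy follows from the corresponding relation for the embedding $e_i$, while (R2) on an $\mathbb{L}_3$-line $\{(x,1),(x,2),(x,3)\}$ is the identity $e_1(x)\cdot e_2(x)=z^{-1}\psi(x,2)=\psi(x,2)$ up to the central element, which one arranges by the choice of $z$. One then checks that $[\psi(p),\psi(q)]\ne 1$ precisely for $p,q$ at distance $3$: distance in $Q(5,2)\times\mathbb{L}_3$ is controlled by distance in $Q(5,2)$ plus the $\mathbb{L}_3$-coordinate, and the commutator $[e_1(x),e_2(y)]$ equals $z$ raised to the bilinear-form pairing of $x$ and $y$; the embedding of $Q(5,2)$ has the property that this pairing is nonzero exactly when the two $Q(5,2)$-points are non-collinear, which translates into the distance-$3$ condition. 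Finally (R1) holds because the $e_i(x)$ generate $E_1 E_2$ and $z$ is recovered as a commutator, so the generated group has order $2^{13}$ and non-trivial centre, forcing it to be $2^{1+12}_+$ by the uniqueness result quoted from \cite{SS2}.

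For part $(2)$: here the abstract already flags that a \emph{new combinatorial construction} of $Q(5,2)\otimes Q(5,2)$ is needed, so I expect the bulk of the work — and the main obstacle — to be producing a model of this near hexagon on $243$ points in which the distance-$3$ relation, hence the would-be commutator pattern, is transparent. I would look for a description of the $243$ points as triples or as cosets carrying two ``$Q(5,2)$-flavoured'' coordinates glued by a tensor-type identification, analogous to the known description of $Q(5,2)\otimes Q(5,2)$ as a glued near hexagon; the target group $2^{1+18}_-$ suggests building $\psi$ out of maps into a $2^{18}$ that is a sum of pieces of dimension $6+6+6$ or $12+6$, with the minus-type arising from the quadratic form attached to the tensor/glueing. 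Once such a model is in hand, the verification is structurally the same as in part $(1)$: check (R2) line by line over the two families of lines, show the commutator of two images is the central involution exactly when the combinatorial data put the two points at distance $3$, deduce (R1) and that the generated group has order $2^{19}$ with centre of order $2$, and invoke the uniqueness statement from \cite{SS2} to pin down the type as $2^{1+18}_-$ rather than $2^{1+18}_+$ — the sign being the delicate point, to be settled either by a direct count of involutions/elements of order $4$ or by exhibiting the appropriate non-degenerate quadratic form of minus type on the Frattini quotient $2^{18}$.

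The step I expect to be hardest is, for $(2)$, finding and justifying the new combinatorial construction and then correctly matching the orthogonal/symplectic structure so that the generated extra-special group is of minus rather than plus type; for $(1)$ the only real care is in choosing the central correction term $z$ in $\psi(x,2)$ so that both (R2) on $\mathbb{L}_3$-lines and the distance-$3$ commutator pattern hold simultaneously, which is a short compatibility check given the embedding of $Q(5,2)$.
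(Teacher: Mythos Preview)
Your outline for part~(1) is essentially the paper's construction, but with one error and one bit of loose wording. The ``central correction'' $z$ must be the identity: if $\psi(x,2)=z\,e_1(x)e_2(x)$, then on an $\mathbb{L}_3$-line you need $e_1(x)\cdot e_2(x)=\psi(x,2)$, forcing $z=1$; and on a line $\{x,y,x*y\}$ inside the middle copy you get $\psi(x,2)\psi(y,2)=z^2 e_1(x*y)e_2(x*y)$, again forcing $z=1$. So there is no free parameter to ``arrange''. Also, $E_1$ and $E_2$ are not ``commuting copies'' --- the whole point is that $[e_1(x),e_2(y)]$ is the central involution exactly when $x\notin y^{\perp}$ in $Q(5,2)$. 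The paper realises $E_1=M$ as the elliptic-quadric embedding and constructs $E_2=\overline{M}$ intrinsically by $\overline{m}:=$ the generator of $C_{\overline{M}}(m^{\perp})$, then checks $m\mapsto\overline{m}$ is a group isomorphism; your $e_2$ has to be this specific dual embedding, not an arbitrary second copy.

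For part~(2) your sketch points in the right direction (the $12+6$ split is exactly the paper's central product $R=M\circ N$ with $M=2^{1+12}_+$, $N=2^{1+6}_-$), but it omits the two ingredients that carry all the weight. First, the combinatorial model: the paper builds $\mathcal{S}_\theta$ with \emph{nine} line types $(L1)$--$(L9)$, not two, using a spread of symmetry $S$ of $Q(5,2)$ and a cocycle $\theta:S\times S\to\mathbb{Z}_3$; the verification of (R2) is a separate computation for each type. Second, and this is the genuinely new idea you are missing, one needs a map $\delta:Q\to I_2(N)$ into $N=2^{1+6}_-$ satisfying $[\delta(x),\delta(y)]=1\Leftrightarrow y\in x^{\perp}$ and the twisted multiplicativity
\[
\delta(x*y)=\begin{cases}\delta(x)\delta(y)&\text{if }xy\in S,\\ \delta(x)\delta(y)\lambda&\text{if }xy\notin S.\end{cases}
\]
The sign flip on non-spread lines is precisely what makes (R2) work on lines of type $(L9)$, where a hidden commutator $[\phi(\bar a),\phi(v)]=\lambda$ must be cancelled. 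The paper establishes the existence of such a $\delta$ by an explicit assignment of $27$ elements in $D_8\circ D_8\circ Q_8$; this is not a routine check and is the heart of the proof. Finally, your plan to determine the $\pm$ type \emph{a posteriori} is unnecessary: since $N=2^{1+6}_-$ is chosen at the outset and $R=M\circ N$, the group is $2^{1+18}_-$ by construction, and the only task is to show $\psi$ lands in it and satisfies (R1), (R2).
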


\bigskip \noindent The slim dense near hexagon $Q(5,2) \otimes Q(5,2)$ has many substructures isomorphic to $Q(5,2) \times \mathbb{L}_3$. We will describe a non-abelian representation of $Q(5,2) \times \mathbb{L}_3$ in Section \ref{sec4}. In Section \ref{sec5}, we will use this to construct a non-abelian representation of $Q(5,2) \otimes Q(5,2)$. To describe the non-abelian representation of $Q(5,2) \otimes Q(5,2)$, we make use of a model of $Q(5,2) \otimes Q(5,2)$ which we discuss in Sections \ref{sec2} and \ref{sec3}.

\bigskip \noindent \textbf{Remark.} Two other constructions of non-abelian representations of slim dense near polygons, in particular, of the slim dense near hexagons on 105 and 135 points, can be found in the paper \cite{PS}.

\section{The point-line geometry $\mathcal{S}_{\theta}$} \label{sec2}

Near quadrangles are usually called {\em generalized quadrangles} (GQ's). A GQ is said to be of {\em order} $(s,t)$ if every line is incident with precisely $s+1$ points and if every point is incident with precisely $t+1$ lines. Up to isomorphism, there exist unique GQ's of order $(2,2)$ and $(2,4)$, see e.g. \cite{PT}. These GQ's are denoted by $W(2)$ and $Q(5,2)$, respectively. A {\em spread} of a point-line geometry is a set of lines partitioning its point set. A spread $S$ of $Q(5,2)$ is called a {\em spread of symmetry} if for every line $L \in S$ and every two points $x_1,x_2 \in L$, there exists an automorphism of $Q(5,2)$ fixing each line of $S$ and mapping $x_1$ to $x_2$. By \cite[Section 7.1]{bdb:sos}, $Q(5,2)$ has up to isomorphism a unique spread of symmetry.

\medskip Now, suppose $S$ is a given spread of symmetry of $Q(5,2)$. If $L_1$ and $L_2$ are two distinct lines of $S$ and if $G$ denotes the unique $(3 \times 3)$-subgrid of $Q(5,2)$ containing $L_1$ and $L_2$, then the unique line $L_3$ of $G$ disjoint from $L_1$ and $L_2$ is also contained in $S$.

Suppose $\theta$ is a map from $S \times S$ to $\mathbb{Z}_{3}$ (the additive group of order three) satisfying the following property:
\begin{enumerate}
\item[$(*)$] If $L_{1},L_{2},L_{3}$ are three lines of $S$ contained in
a grid of $Q(5,2)$, then $\theta(L_{1},L_{2})+\theta(L_{2},L_{3})=\theta(L_{1},L_{3})$.
\end{enumerate}
Notice that $\theta(L,L)=0$ and $\theta(M,L)=-\theta(L,M)$ for all $L,M \in S$. With $\theta$, there is associated a point-line geometry $\mathcal{S}_{\theta}$. The points of $\mathcal{S}_{\theta}$ are of four types:
\begin{enumerate}
\item[]
\begin{enumerate}
\item[$(P1)$] The points $x$ of $Q(5,2)$.

\item[$(P2)$] The symbols $\bar{x}$, where $x$ is a point of
$Q(5,2)$.

\item[$(P3)$] The symbols $\bar{\bar{x}}$, where $x$ is a point of $Q(5,2)$.

\item[$(P4)$] The triples $(x,y,i)$, where $i \in \mathbb{Z}_{3}$ and $x,y$
are distinct collinear points of $Q(5,2)$ satisfying $xy \in S$.
\end{enumerate}
\end{enumerate}
The lines of $\mathcal{S}_{\theta}$ are of nine types:
\begin{enumerate}
\item[]
\begin{enumerate}
\item[$(L1)$] The lines $\{x,y,z\}$ of $Q(5,2)$.

\item[$(L2)$] The sets $\{\bar{x},\bar{y},\bar{z}\}$, where $\{x,y,z\}$ is a
line of $Q(5,2)$.

\item[$(L3)$] The sets $\{\bar{\bar{x}},\bar{\bar{y}},\bar{\bar{z}}\}$,
where $\{x,y,z\}$ is a line of $Q(5,2)$.

\item[$(L4)$] The sets $\{x,\bar{x},\bar{\bar{x}}\}$, where $x$ is a
point of $Q(5,2)$.

\item[$(L5)$] The sets $\{ a,(a,b,i),(a,c,i) \}$, where $i \in \mathbb{Z}_{3}$
and $\{ a,b,c \} \in S$.

\item[$(L6)$] The sets $\{ \bar{a},(b,a,i),(c,a,i) \}$, where $i \in \mathbb{Z}_{3}$
and $\{ a,b,c \} \in S$.

\item[$(L7)$] The sets $\{ \bar{\bar{a}},(b,c,i),(c,b,i) \}$, where $i\in \mathbb{Z}_{3}$
and $\{a,b,c\} \in S$.

\item[$(L8)$] The sets $\{(a,b,i),(b,c,j),(c,a,k)\}$, where $\{i,j,k\}=\mathbb{Z}_{3}$
and $\{ a,b,$ $c \}$ is a line belonging to $S$.

\item[$(L9)$] The sets $\{ (a,u,i),(b,v,j),(c,w,k) \}$, where $(i)$ $\{ a,b,c \}$ and $\{ u,v,$ $w \}$ are two disjoint lines of $Q(5,2)$; $(ii)$ $d(a,u)=d(b,v)=d(c,w)$ $=1$; $(iii)$ $au,bv,cw\in S$; $(iv)$ $j=i+\theta(au,bv)$, $k=i+\theta(au,cw)$.
\end{enumerate}
\end{enumerate}
Incidence is containment. One can easily show that $\mathcal{S}_{\theta}$ is a partial linear space. In order to show that two distinct points of $\mathcal{S}_{\theta}$ are contained in at most one line of Type $(L9)$, one has to make use of Property $(*)$.

\section{An isomorphism $Q(5,2)\otimes Q(5,2) \cong \mathcal{S}_\theta$} \label{sec3}

The aim of this section is to show that the slim dense near hexagon $Q(5,2) \otimes Q(5,2)$ is isomorphic to a point-line geometry $\mathcal{S}_\theta$ for a suitable spread of symmetry $S$ of $Q(5,2)$ and a suitable map $\theta: S \times S \to \mathbb{Z}_3$ satisfying Property ($\ast$). We start with recalling some known properties of the near hexagon $Q(5,2) \otimes Q(5,2)$.

(1) Every two points $x$ and $y$ of $Q(5,2) \otimes Q(5,2)$ are contained in a unique convex subspace of diameter 2, called a {\em quad}. The points and lines which are contained in a given quad define a GQ which is isomorphic to either the $(3 \times 3)$-grid or $Q(5,2)$.

(2) If $Q$ is a $Q(5,2)$-quad and $x \not\in Q$, then $x$ is collinear with a unique point $\pi_Q(x) \in Q$ and we denote by $\mathcal{R}_Q(x)$ the unique point of $x \pi_Q(x)$ distinct from $x$ and $\pi_Q(x)$. If $x \in Q$, then we define $\pi_Q(x) = \mathcal{R}_Q(x) := x$. The map $x \mapsto \mathcal{R}_Q(x)$ defines an automorphism of $Q(5,2) \otimes Q(5,2)$. If $Q_1$ and $Q_2$ are two disjoint $Q(5,2)$-quads, then the map $Q_1 \to Q_2; x \mapsto \pi_{Q_2}(x)$ defines an isomorphism between $Q_1$ and $Q_2$.

(3) There exist two partitions $T_1$ and $T_2$ of the point set of $Q(5,2) \otimes Q(5,2)$ into $Q(5,2)$-quads.

(4) Every element of $T_{1}$ intersects every element of $T_{2}$ in a line. As a consequence, $S^{\otimes} := \{Q_{1}\cap Q_{2}:Q_{1}\in T_{1}\text{ and }Q_{2}\in T_{2}\}$ is a spread of $Q(5,2)\otimes Q(5,2)$.

(5) For every $Q \in T_i$, $i \in \{ 1,2 \}$, the set $\{ Q \cap R : R \in T_{3-i} \}$ is a spread of symmetry of $Q$.

(6) Every line $L$ of $Q(5,2)\otimes Q(5,2)$ not belonging to $S^{\otimes}$ is contained in a unique quad of $T_{1} \cup T_2$.

\bigskip Now, let $Q$ and $\overline{Q}$ be two disjoint $Q(5,2)$-quads belonging to $T_{1}$ and put $\overline{\overline{Q}}:=\mathcal{R}_{Q}(\overline{Q})=\mathcal{R}_{\overline{Q}}(Q)$. For every point $x$ of $Q$, put $\bar{x}:=\pi_{\overline{Q}}(x)$ and $\bar{\bar{x}}:=\pi_{\overline{\overline{Q}}}(x)$.

Put $S=\{Q\cap Q_{2}:Q_{2}\in T_{2}\}$. Then $S$ is a spread of symmetry of $Q$. For every $L\in S$, let $R_{L}$ denote the unique element of $T_{2}$ containing $L$. Let $L^{*}$ denote a specific line of $S$ and put $R^{*}=R_{L^{*}}$. For every $L\in S$, $R_{L}\cap (Q \cup \overline{Q} \cup \overline{\overline{Q}})$ is a $(3\times 3)$-subgrid $\sigma_{L}$ of $R_{L}$. This $(3\times 3)$-subgrid $\sigma_{L}$ is contained in precisely three $W(2)$-subquadrangles of $R_{L}$. We denote by $W^{0}, W^{1},W^{2}$ the three $W(2)$-subquadrangles of $R^{*}$
containing $R^{*}\cap (Q \cup \overline{Q} \cup \overline{\overline{Q}})$. For every $L \in S$ and $i\in\mathbb{Z}_{3}$, put $W_{L}^{i}:=\pi_{R_{L}}(W^{i})$.

For every $i\in\mathbb{Z}_{3}$, for every $L\in S$ and for all $x,y \in L$ with $x \neq y$, we denote by $(x,y,i)$ the unique point $\mu$ of $R_{L}\setminus(Q \cup \overline{Q} \cup \overline{\overline{Q}})$ such that $\pi_{Q}(\mu)=x,\pi_{\overline{Q}}(\mu)=\bar{y}$ and $\mu\in W_{L}^{i}$. The point $(x,y,i)$ is the unique point of $W_{L}^{i}$ collinear with $x$ and $\bar{y}$, but not contained in $\sigma_{L}$.

\begin{lemma} \label{lem3.1}
Every point of $Q(5,2) \otimes Q(5,2)$ not contained in $Q \cup \overline{Q} \cup \overline{\overline{Q}}$ has received a unique label.
\end{lemma}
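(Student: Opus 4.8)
The plan is to count: the near hexagon $Q(5,2) \otimes Q(5,2)$ has $243$ points, the three disjoint $Q(5,2)$-quads $Q, \overline{Q}, \overline{\overline{Q}}$ account for $27 \cdot 3 = 81$ of them, so there are exactly $162$ points outside $Q \cup \overline{Q} \cup \overline{\overline{Q}}$. On the other hand, there are $3$ choices for $i \in \mathbb{Z}_3$, and for each element $L$ of the spread of symmetry $S$ of $Q$ (there are $|S| = 9$ such lines, since $S$ partitions the $27$ points of $Q$ into lines of size $3$) there are $3 \cdot 2 = 6$ ordered pairs $(x,y)$ with $x,y \in L$ and $x \neq y$. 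This gives $3 \cdot 9 \cdot 6 = 162$ labels $(x,y,i)$, matching the point count. So it suffices to show the labelling map is well-defined (every label names at most one point) and that distinct labels name distinct points; surjectivity onto the $162$ points then follows by the equality of cardinalities. Equivalently, and more economically, I would just prove that every point $\mu$ outside $Q \cup \overline{Q} \cup \overline{\overline{Q}}$ receives at least one label and that the total label count is $162$; combined with $|S \times S|$-style bookkeeping this forces uniqueness.

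First I would fix $\mu \notin Q \cup \overline{Q} \cup \overline{\overline{Q}}$ and locate it. Using property (6), the line through $\mu$ and $\pi_Q(\mu)$ — which has the form $\{\mu, \pi_Q(\mu), \mathcal{R}_Q(\mu)\}$ and does not lie in $S^{\otimes}$ because it meets $Q$ in a single point — lies in a unique quad of $T_1 \cup T_2$; since it already meets the $T_1$-quad $Q$ only in $\pi_Q(\mu)$, that quad must be an element $R \in T_2$. Intersecting, $L := R \cap Q \in S$, so $\mu$ lies in $R = R_L$ for a well-defined $L \in S$, and by construction $\mu \in R_L \setminus (Q \cup \overline{Q} \cup \overline{\overline{Q}})$. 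Next I would argue that $R_L$, being a $Q(5,2)$-quad, is covered by its three $W(2)$-subquadrangles $W_L^0, W_L^1, W_L^2$ through $\sigma_L = R_L \cap (Q \cup \overline{Q} \cup \overline{\overline{Q}})$ — this is the standard fact that a $3 \times 3$ grid in $Q(5,2)$ lies in exactly three $W(2)$'s and these cover $Q(5,2)$ — so $\mu \in W_L^i$ for some $i \in \mathbb{Z}_3$; one checks $i$ is unique for $\mu \notin \sigma_L$ since two distinct $W(2)$-subquadrangles meet precisely in $\sigma_L$. Finally, setting $x := \pi_Q(\mu) \in L$ and $y$ to be the point of $L$ with $\bar y = \pi_{\overline{Q}}(\mu)$ (here I use that $x \mapsto \bar x$ is the isomorphism $Q \to \overline{Q}$ given by $\pi_{\overline{Q}}$, so $\pi_{\overline{Q}}(\mu)$ lies on $\overline{L}$ and pulls back to a point of $L$), I need $x \neq y$, i.e. that $\mu$ does not project to ``the same'' point of $Q$ and $\overline{Q}$; that happens exactly when $\mu$ would lie on a line of Type $(L4)$ through $\sigma_L$, which is excluded since $\mu \notin \sigma_L$. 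Then $\mu = (x,y,i)$ by the defining property of the label.

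For uniqueness of the label attached to $\mu$: the line $L$ is recovered as $R \cap Q$ where $R$ is the unique quad of $T_2$ on the line $\mu\,\pi_Q(\mu)$, hence is an invariant of $\mu$; then $i$ is the unique index with $\mu \in W_L^i$; and $x = \pi_Q(\mu)$, $y = (\text{pullback of }\pi_{\overline{Q}}(\mu))$ are determined. Conversely, distinct labels give distinct points: if $(x,y,i) = (x',y',i')$ then applying $\pi_Q$ gives $x = x'$, the two points lie in a common $W(2)$-subquadrangle so (as they determine the same $L = xy = x'y'$) the subquadrangle indices agree, $i = i'$, and applying $\pi_{\overline{Q}}$ gives $\bar y = \overline{y'}$ hence $y = y'$. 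This yields an injection from the $162$ labels into the $162$ points outside $Q \cup \overline{Q} \cup \overline{\overline{Q}}$, which by the pigeonhole count is a bijection, proving every such point has received a unique label.

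The main obstacle I anticipate is bookkeeping the interaction between the two projections $\pi_Q$ and $\pi_{\overline{Q}}$ within the quad $R_L$: one must verify that for $\mu \in R_L \setminus \sigma_L$ the pair $(\pi_Q(\mu), \pi_{\overline{Q}}(\mu))$ really does arise from two \emph{distinct} points of $L$ (rather than from a Type-$(L4)$ configuration), and that within a fixed $W_L^i$ the point $\mu$ is uniquely determined by this pair — essentially the final sentence preceding the lemma, that $(x,y,i)$ is ``the unique point of $W_L^i$ collinear with $x$ and $\bar y$ but not in $\sigma_L$.'' Establishing that last uniqueness cleanly requires knowing the local structure of a $W(2)$-subquadrangle relative to the grid $\sigma_L$ and the three quads meeting it, so I would isolate it as the one genuinely geometric step and handle the rest by counting.
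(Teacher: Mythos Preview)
Your proposal is correct and follows essentially the same route as the paper: locate $\mu$ in the unique $T_2$-quad $R_L$, pick out the unique $W(2)$-subquadrangle $W_L^i$ through $\sigma_L$ containing $\mu$, set $x=\pi_Q(\mu)$ and $\bar y=\pi_{\overline{Q}}(\mu)$, and rule out $x=y$ by observing that would force $\mu=\bar{\bar{x}}\in\overline{\overline{Q}}$. The only differences are cosmetic: the paper finds $R$ directly from the fact that $T_2$ is a partition (property~(3)) rather than via the line $\mu\,\pi_Q(\mu)$ and property~(6), and it does not bother with the $162=162$ count, since the construction already makes clear that $L$, $i$, $x$, $y$ are each determined by $\mu$.
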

\begin{proof}
Let $\mu$ be a point of $Q(5,2) \otimes Q(5,2)$ not contained in $Q \cup \overline{Q} \cup \overline{\overline{Q}}$, let $R$ denote the unique element of $T_{2}$ containing $\mu$ and put $L:=R\cap Q$. Then $R=R_{L}$. There exists a unique $W(2)$-subquadrangle of $R$ containing $\mu$ and $\sigma_{L}$. Let $i \in \mathbb{Z}_{3}$ such that $\mu \in W_{L}^{i}$. Let $x$ and $y$ be the points of $L$ such that $x = \pi_{Q}(\mu)$ and $\bar{y}=\pi_{\overline{Q}}(\mu)$. If $x=y$, then $\{x,\bar{y},\mu\}$  is a set of mutually collinear points, implying that $\mu=\bar{\bar{x}}$, contradicting $\mu \notin \overline{\overline{Q}}$. Hence $x\neq y$ and the point $\mu$ has label $(x,y,i)$. It is also clear that $\mu$ cannot be labeled in different ways.
\end{proof}

We will now define a map $\theta:S\times S \to \mathbb{Z}_{3}$. For each ordered pair $(L_{1},L_{2})$ of lines of $S$, the map $R^\ast \to R^\ast; x \mapsto \pi_{R^{*}}\circ\pi_{R_{L_{2}}}\circ\pi_{R_{L_{1}}}(x)$
determines an automorphism of $R^{*}$ fixing each line of the spread $\{R^{*}\cap Q_{1}:Q_{1}\in T_{1}\}$ of $R^{*}$. By \cite[Theorem 4.1]{bdb:sos}, such an automorphism either is trivial or acts on any line of the form $R^{*}\cap Q_{1}, Q_{1}\in T_{1}$, as a cycle.  Since every line $R^{*}\cap Q_{1}, Q_{1}\in T_{1}\setminus \{Q,\overline{Q},\overline{\overline{Q}}\}$, intersects each $W(2)$-subquadrangle $W^{i}$, $i \in \mathbb{Z}_{3}$, in a unique point, the map $R^\ast \to R^\ast; x \mapsto \pi_{R^{*}} \circ \pi_{R_{L_{2}}} \circ \pi_{R_{L_{1}}}(x)$ is either trivial or permutes the elements of $\{W^{0}, W^{1}, W^{2}\}$ in
one of the following ways:
$$W^{0}\rightarrow W^{1}\rightarrow W^{2}\rightarrow W^{0},
\;W^{0}\rightarrow W^{2}\rightarrow W^{1}\rightarrow W^{0}.$$
Hence, there exists a unique $\theta(L_{1},L_{2})\in \mathbb{Z}_{3}$ such that
$$\pi_{R^{*}}\circ \pi_{R_{L_{2}}}\circ \pi_{R_{L_{1}}}(W^{i})=W^{i+\theta(L_{1},L_{2})}$$
for every $i \in \mathbb{Z}_{3}$.

\begin{lemma} \label{lem3.2}
The following holds:
\begin{enumerate}
\item[$(i)$] For every $L\in S$, $\theta(L,L)=0$.
\item[$(ii)$] For any two lines $L_{1}$ and $L_{2}$ of $S$, $\theta(L_{2},L_{1})=-\theta(L_{1},L_{2})$.
\item[$(iii)$] If $L_{1},L_{2},L_{3}$ are three lines of $S$ which are contained in a grid, then
$\theta(L_{1},L_{2})+\theta(L_{2},L_{3})=\theta(L_{1},L_{3})$.
\item[$(iv)$] If $L_{1},L_{2},L_{3}$ are three lines of $S$ which are not contained in a grid, then
$\theta(L_{1},L_{2})+\theta(L_{2},L_{3})\neq \theta(L_{1},L_{3})$.
\end{enumerate}
\end{lemma}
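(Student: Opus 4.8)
The plan is to express everything through the automorphisms of $R^*$ occurring in the definition of $\theta$. For an ordered pair $(L_1,L_2)$ of lines of $S$, write $\phi_{L_1,L_2}\colon R^*\to R^*$ for the automorphism $x\mapsto\pi_{R^*}\circ\pi_{R_{L_2}}\circ\pi_{R_{L_1}}(x)$; it fixes each line of the spread $\{R^*\cap Q_1:Q_1\in T_1\}$ of $R^*$, and $\theta(L_1,L_2)\in\mathbb{Z}_3$ is defined by $\phi_{L_1,L_2}(W^i)=W^{i+\theta(L_1,L_2)}$. By \cite[Theorem 4.1]{bdb:sos} a nontrivial automorphism of $R^*$ fixing each line of this spread acts on each such line as a $3$-cycle, so it has no fixed point, and since each such line meets each $W^i$ in a single point it shifts the $W^i$ nontrivially; combining this with property (5) (applied to $R^*$ and the line $R^*\cap Q$, which yields transitivity on that line) shows that the group $G_0$ of all automorphisms of $R^*$ fixing each line of the spread has order $3$, acts freely on the points of $R^*$, and that sending $g\in G_0$ to the shift it induces on the $W^i$ is an isomorphism $G_0\to\mathbb{Z}_3$. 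Thus each $\phi_{L_1,L_2}$ lies in $G_0$; two elements of $G_0$ that agree at one point of $R^*$ are equal; and $\theta(L_1,L_3)=\theta(L_1,L_2)+\theta(L_2,L_3)$ if and only if $\phi_{L_1,L_3}=\phi_{L_2,L_3}\circ\phi_{L_1,L_2}$. The only routine computation I shall use is that for disjoint (or equal) $Q(5,2)$-quads $A,B$ one has $\pi_A\circ\pi_B|_A=\mathrm{id}_A$ — if $x\in A\setminus B$ then $\pi_B(x)\sim x$ and $x$ is the unique point of $A$ collinear with $\pi_B(x)$ — so $\pi_A|_B$ and $\pi_B|_A$ are inverse isomorphisms.

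Parts $(i)$ and $(ii)$ are then immediate. For $(i)$, $\phi_{L,L}=\pi_{R^*}\circ\pi_{R_L}\circ\pi_{R_L}|_{R^*}=\pi_{R^*}\circ\pi_{R_L}|_{R^*}=\mathrm{id}_{R^*}$, so $\theta(L,L)=0$. For $(ii)$, reading $\phi_{L_1,L_2}$ as the composite of the isomorphisms $\pi_{R_{L_1}}|_{R^*}$, $\pi_{R_{L_2}}|_{R_{L_1}}$, $\pi_{R^*}|_{R_{L_2}}$ and inverting gives $\phi_{L_1,L_2}^{-1}=\phi_{L_2,L_1}$, hence $\theta(L_2,L_1)=-\theta(L_1,L_2)$.

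For $(iii)$ and $(iv)$ I would test whether $\phi_{L_1,L_3}=\phi_{L_2,L_3}\circ\phi_{L_1,L_2}$ by evaluating both sides at a convenient point. Choose $p\in R^*$ with $y:=\pi_{R_{L_1}}(p)\in L_1$ — possible since $\pi_{R_{L_1}}|_{R^*}$ maps onto $R_{L_1}\supseteq L_1$. Using the cancellation fact to collapse the inner composition, one finds
$$\phi_{L_1,L_3}(p)=\pi_{R^*}\bigl(\pi_{R_{L_3}}(y)\bigr)\qquad\text{and}\qquad(\phi_{L_2,L_3}\circ\phi_{L_1,L_2})(p)=\pi_{R^*}\bigl(\pi_{R_{L_3}}(\pi_{R_{L_2}}(y))\bigr).$$
Since $y\in L_1\subseteq Q$ and $L_1,L_2,L_3$ are distinct lines of the spread $S$ of $Q$, the generalized quadrangle axiom identifies $\pi_{R_{L_2}}(y)$ with the unique point $z\in L_2$ collinear with $y$ in $Q$, and likewise $\pi_{R_{L_3}}(y)$, $\pi_{R_{L_3}}(z)$ with the unique points $w,w'\in L_3$ collinear with $y$, $z$ in $Q$. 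As $\pi_{R^*}$ is injective on $R_{L_3}$, the two automorphisms agree at $p$ — hence coincide — precisely when $w=w'$. So $(iii)$ and $(iv)$ reduce to deciding, inside $Q\cong Q(5,2)$, whether projecting $y$ onto $L_3$ equals first projecting $y$ onto $L_2$ and then onto $L_3$.

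Settling this is the heart of the argument and the step I expect to require the most care. If $L_1,L_2,L_3$ are the parallel class of a $(3\times3)$-grid $G$ of $Q$ and $M$ is the transversal of $G$ through $y$, then $M$ meets $L_2$ and $L_3$ in $z$ and $w$ (these being the points of $L_2$, $L_3$ collinear with $y$), and $z\sim w$ along $M$ forces $w'=w$; this proves $(iii)$. Conversely, if $w=w'$ then $y,z,w$ are three distinct pairwise collinear points of $Q$, so they lie on a common line $\ell$, which meets the three mutually disjoint spread lines $L_1,L_2,L_3$. I would then invoke the combinatorial lemma that a line of $Q(5,2)$ meeting three mutually disjoint lines of $S$ forces these to be the parallel class of a grid: such an $\ell$ is not in $S$; being the line through the collinear points $y\in L_1$, $z\in L_2$ of the $(3\times3)$-grid spanned by $L_1$ and $L_2$ it lies in that grid, and meeting two of its parallel lines it is a transversal, so it meets the grid's third parallel line, which shares the point $w$ with $\ell$ and hence with $L_3$ and must therefore equal $L_3$. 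Consequently, if $L_1,L_2,L_3$ are not contained in a grid then $w\neq w'$, so $\phi_{L_1,L_3}$ and $\phi_{L_2,L_3}\circ\phi_{L_1,L_2}$ differ at $p$ and hence are distinct elements of $G_0$; this gives $\theta(L_1,L_3)\neq\theta(L_1,L_2)+\theta(L_2,L_3)$, i.e.\ $(iv)$.
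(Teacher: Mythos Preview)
Your proof is correct and follows essentially the same approach as the paper: parts $(i)$ and $(ii)$ are identical, and for $(iii)$ and $(iv)$ both arguments hinge on the same geometric fact that $\pi_{R_{L_3}}$ factors through $\pi_{R_{L_2}}$ on $R_{L_1}$ precisely when $L_1,L_2,L_3$ lie in a grid of $Q$. The only difference is packaging: where the paper manipulates the global compositions on the $W^i$ (and for $(iv)$ deduces that $\pi_{R_{L_3}}\circ\pi_{R_{L_2}}\circ\pi_{R_{L_1}}$ would be the identity on all of $R_{L_3}$), you exploit the free action of $G_0$ to test equality at a single point chosen so that everything lands in $Q$, which makes the grid argument more transparent.
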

\begin{proof}
$(i)$ For every $i\in\mathbb{Z}_{3}$, we have $\pi_{R^{*}}\circ
\pi_{R_{L}}\circ \pi_{R_{L}}(W^{i})=\pi_{R^{*}}\circ
\pi_{R_{L}}(W^{i})=W^{i}$. Hence, $\theta(L,L)=0$.

$(ii)$ If $\pi_{R^{*}}\circ \pi_{R_{L_{2}}}\circ
\pi_{R_{L_{1}}}(W^{i})=W^{i+\theta(L_{1},L_{2})}$ for every $i\in
\mathbb{Z}_{3}$, then $W^{i}=\pi_{R^{*}}\circ \pi_{R_{L_{1}}}\circ
\pi_{R_{L_{2}}}(W^{i+\theta(L_{1},L_{2})})$ for every $i\in
\mathbb{Z}_{3}$. It follows that
$\theta(L_{2},L_{1})=-\theta(L_{1},L_{2})$.

$(iii)$ Let $L_{1},L_{2},L_{3}$ be three lines of $S$ which are
contained in a grid. Then $\pi_{R^{*}}\circ \pi_{R_{L_{3}}}\circ
\pi_{R_{L_{1}}}(W^{i})=\pi_{R^{*}}\circ \pi_{R_{L_{3}}}\circ
\pi_{R_{L_{2}}}\circ \pi_{R_{L_{1}}}(W^{i})=\pi_{R^{*}}\circ
\pi_{R_{L_{3}}}\circ \pi_{R_{L_{2}}}\circ \pi_{R^{*}}\circ
\pi_{R_{L_{2}}}\circ\pi_{R_{L_{1}}}(W^{i})=\pi_{R^{*}}\circ
\pi_{R_{L_{3}}}\circ
\pi_{R_{L_{2}}}(W^{i+\theta(L_{1},L_{2})})=W^{i+\theta(L_{1},L_{2})+\theta(L_{2},L_{3})}$.
Hence,
$\theta(L_{1},L_{3})=\theta(L_{1},L_{2})+\theta(L_{2},L_{3})$.

$(iv)$  Let $L_{1},L_{2},L_{3}$ be three lines of $S$ which are not contained in a grid. Suppose that
$\theta(L_{1},L_{3})=\theta(L_{1},L_{2})+\theta(L_{2},L_{3})$. Then for every $y \in R^\ast$, $\pi_{R^{*}}\circ \pi_{R_{L_{3}}} \circ \pi_{R_{L_{1}}}(y) = (\pi_{R^{*}}\circ \pi_{R_{L_{3}}} \circ
\pi_{R_{L_{2}}})\circ (\pi_{R^{*}}\circ \pi_{R_{L_{2}}}\circ\pi_{R_{L_{1}}})(y)$, that is, $\pi_{R^{*}}\circ
\pi_{R_{L_{3}}}\circ \pi_{R_{L_{1}}}(y)=\pi_{R^{*}}\circ
\pi_{R_{L_{3}}}\circ \pi_{R_{L_{2}}}\circ \pi_{R_{L_{1}}}(y)$. Hence, the map $R_{L_{3}} \to R_{L_{3}}$ defined by $x\mapsto \pi_{R_{L_{3}}}\circ \pi_{R_{L_{2}}}\circ \pi_{R_{L_{1}}}(x)$ is the identity map on $R_{L_{3}}$. This implies that the points $x,\pi_{R_{L_{1}}}(x),\pi_{R_{L_{2}}}(x)$ are mutually collinear
for every $x\in R_{L_{3}}$, that is, $\left\{x,\pi_{R_{L_{1}}}(x),\pi_{R_{L_{2}}}(x)\right\}$ is a line
for every $x\in R_{L_{3}}$. This contradicts the fact that $L_{1},L_{2},L_{3}$ are not contained in a grid. Hence, $\theta(L_{1},L_{3})\neq \theta(L_{1},L_{2})+\theta(L_{2},L_{3})$.
\end{proof}

\begin{proposition} \label{prop3.3}
$Q(5,2)\otimes Q(5,2) \cong \mathcal{S}_{\theta}$, where $\theta$ is as defined above.
\end{proposition}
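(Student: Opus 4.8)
The plan is to construct an explicit bijection $\Psi$ from the point set of $\mathcal{S}_\theta$ to the point set of $Q(5,2) \otimes Q(5,2)$ and to verify that it maps lines to lines. The natural candidate is dictated by the notation already set up: send a point $x$ of Type $(P1)$ to the corresponding point $x$ of $Q$, send $\bar{x}$ of Type $(P2)$ to $\bar{x} = \pi_{\overline{Q}}(x) \in \overline{Q}$, send $\bar{\bar{x}}$ of Type $(P3)$ to $\bar{\bar{x}} = \pi_{\overline{\overline{Q}}}(x) \in \overline{\overline{Q}}$, and send a triple $(x,y,i)$ of Type $(P4)$ to the point $(x,y,i)$ of $Q(5,2) \otimes Q(5,2) \setminus (Q \cup \overline{Q} \cup \overline{\overline{Q}})$ defined just before Lemma \ref{lem3.1}. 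First I would check that $\Psi$ is a bijection: on Types $(P1)$--$(P3)$ this is immediate since $\pi_{\overline{Q}}|_Q$ and $\pi_{\overline{\overline{Q}}}|_Q$ are isomorphisms $Q \to \overline{Q}$, $Q \to \overline{\overline{Q}}$ by property (2), and on Type $(P4)$ it is exactly the content of Lemma \ref{lem3.1}. A quick count also confirms $81 + 81 + 81 = 243$.

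The substantive work is checking that each of the nine line types $(L1)$--$(L9)$ of $\mathcal{S}_\theta$ is carried to a genuine line of $Q(5,2) \otimes Q(5,2)$, and that every line of $Q(5,2) \otimes Q(5,2)$ arises this way. Types $(L1)$--$(L3)$ are lines inside the $Q(5,2)$-quads $Q$, $\overline{Q}$, $\overline{\overline{Q}}$, and go to lines because $\pi_{\overline{Q}}$, $\pi_{\overline{\overline{Q}}}$ are isomorphisms. Type $(L4)$ consists of the triples $\{x, \bar{x}, \bar{\bar{x}}\}$: these are mutually collinear because $\bar{x} = \pi_{\overline{Q}}(x)$ and $\bar{\bar{x}} = \mathcal{R}_Q(\bar{x}) = \mathcal{R}_{\overline{Q}}(x)$ lies on the line $x\bar{x}$ by the definition $\overline{\overline{Q}} = \mathcal{R}_Q(\overline{Q})$, so $\{x,\bar x,\bar{\bar x}\}$ is a line. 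Types $(L5)$--$(L7)$ describe, for a fixed line $L = \{a,b,c\} \in S$ and fixed $i$, the three lines of $R_L$ inside the subquadrangle $W_L^i$ that join $\sigma_L$ to the ``new'' points; here one unwinds the definition of $(x,y,i)$ via the projections $\pi_Q, \pi_{\overline{Q}}$ and the incidence structure of the GQ $W(2) \cong W_L^i$. Type $(L8)$ captures the remaining lines of $W_L^i$ not meeting $\sigma_L$ (there the index shift $\{i,j,k\} = \mathbb{Z}_3$ reflects how the three $W(2)$-subquadrangles of $R_L$ carve up the complement of $\sigma_L$), which is again a local verification inside a single $R_L$.

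The hard part will be Type $(L9)$: these lines involve two disjoint lines $\{a,b,c\}, \{u,v,w\}$ of $Q$ with $au, bv, cw \in S$, hence three distinct elements $R_{au}, R_{bv}, R_{cw}$ of $T_2$, and a line of $Q(5,2) \otimes Q(5,2)$ not contained in any quad of $T_1 \cup T_2$ (by property (6), $S^\otimes$-lines and quad-interior lines are accounted for by the earlier types). The claim is that $\{(a,u,i), (b,v,j), (c,w,k)\}$ is a line precisely when $j = i + \theta(au,bv)$ and $k = i + \theta(au,cw)$, and this is exactly where the definition of $\theta$ via $\pi_{R^*} \circ \pi_{R_{L_2}} \circ \pi_{R_{L_1}}$ and Lemma \ref{lem3.2} is needed: one must show that the point $\mathcal{R}_{R_{bv}}((a,u,i))$-type projection lands in the correct subquadrangle $W_{bv}^j$, which forces the stated value of $j$. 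I would argue this by tracking a point of the hypothetical line through the projections $\pi_{R_{au}}, \pi_{R_{bv}}, \pi_{R_{cw}}$ and comparing with the subquadrangle labels transported from $R^*$. Finally, a dimension/counting argument — the number of lines of each type in $\mathcal{S}_\theta$ sums to the total number of lines of $Q(5,2) \otimes Q(5,2)$ — together with the fact that $\Psi$ is an injective incidence-preserving map shows $\Psi$ is an isomorphism; alternatively one invokes that $Q(5,2)\otimes Q(5,2)$ is the unique slim dense near hexagon on $243$ points and checks $\mathcal{S}_\theta$ is such a near hexagon, but the direct line-by-line check is cleaner given the setup.
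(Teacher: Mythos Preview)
Your overall strategy matches the paper's: set up the labelling bijection and then check, line-type by line-type, that the nine families $(L1)$--$(L9)$ correspond exactly to the lines of $Q(5,2)\otimes Q(5,2)$. The treatment of $(L1)$--$(L7)$ is fine. However, you misidentify the geometric loci for both $(L8)$ and $(L9)$, and these slips would derail the verification.

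For $(L8)$ you say these are ``the remaining lines of $W_L^i$ not meeting $\sigma_L$''. But every line of the $W(2)$-subquadrangle $W_L^i$ meets the $(3\times 3)$-grid $\sigma_L$ (count: $6$ grid lines plus $9$ lines meeting the grid in a single point already exhausts the $15$ lines of $W(2)$), so that set is empty. The correct picture, and the one the paper uses, is that $(L8)$-lines are the lines of $R_L$ \emph{disjoint} from $\sigma_L$; such a line hits each of the three subquadrangles $W_L^0,W_L^1,W_L^2$ in exactly one point, which is precisely why the labels carry three distinct third coordinates $\{i,j,k\}=\mathbb{Z}_3$. For $(L9)$ you assert these lines lie ``not in any quad of $T_1\cup T_2$'', invoking property~(6). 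Property~(6) says the opposite: every line outside $S^\otimes$ \emph{does} lie in a unique quad of $T_1\cup T_2$, so no such lines exist. The $(L9)$-lines are those contained in a quad $Q_1\in T_1\setminus\{Q,\overline{Q},\overline{\overline{Q}}\}$ and not in $S^\otimes$; one recovers the labels by projecting $M$ to $Q$ and to $\overline{Q}$, and the index shifts $j=i+\theta(au,bv)$, $k=i+\theta(au,cw)$ then drop out directly from the definition of $\theta$ (since $W_L^i=\pi_{R_L}(W^i)$). Once these two identifications are corrected, your line-by-line plan goes through exactly as in the paper, and your proposed counting alternative for surjectivity on lines is also sound.
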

\begin{proof}
We must show that the set of lines of $Q(5,2)\otimes Q(5,2)$ are in bijective correspondence with the sets of Type $(L1),(L2),\cdots,(L9)$ defined in Section \ref{sec2}. Obviously,
\begin{enumerate}
\item[$\bullet$] the set of lines of $Q(5,2)\otimes Q(5,2)$ contained in $Q$ correspond to the sets of Type $(L1)$;

\item[$\bullet$] the set of lines of $Q(5,2)\otimes Q(5,2)$ contained in $\overline{Q}$ correspond to the sets of Type $(L2)$;
\item[$\bullet$] the set of lines of $Q(5,2)\otimes Q(5,2)$ contained in $\overline{\overline{Q}}$ correspond to the sets of Type $(L3)$;
\item[$\bullet$] the set of lines of $Q(5,2)\otimes Q(5,2)$ meeting $Q,\overline{Q}$ and $\overline{\overline{Q}}$ correspond to the sets of Type $(L4)$.
\end{enumerate}
Consider a line $M$ of $R_{L}$ which is not contained in $\sigma_{L}$ and which intersects $\sigma_{L}$ in a point $a\in L$ of $Q$. Put $L=\{a,b,c\}$. There exists a unique $W(2)$-subquadrangle $W_{L}^{i}$ containing $\sigma_{L}$ and $M$. One readily sees that the points of $M$ have labels $a$, $(a,b,i)$ and $(a,c,i)$. So, $M$ corresponds to a set of Type $(L5)$. Conversely, every set of Type $(L5)$ corresponds to a (necessarily unique) line of $Q(5,2)\otimes Q(5,2)$.

Next, consider a line $M$ of $R_{L}$ which is not contained in $\sigma_{L}$ and which intersects $\sigma_{L}$ in a point $\bar{a}$ of $\overline{Q}$. Put $L=\{a,b,c\}$. Then, there exists a unique $W(2)$-subquadrangle $W_{L}^{i}$ containing $\sigma_{L}$ and $M$. One readily sees that the points of $M$ have labels $\bar{a}$, $(b,a,i)$ and $(c,a,i)$. So, $M$ corresponds to a set of Type $(L6)$. Conversely, every set of Type $(L6)$ corresponds to
a (necessarily unique) line of $Q(5,2)\otimes Q(5,2)$.

Now, consider a line $M$ of $R_{L}$ which is not contained in $\sigma_{L}$ and which intersects $\sigma_{L}$ in a point $\bar{\bar{a}}$ of $\overline{\overline{Q}}$. Put $L=\{a,b,c\}$. Then, there exists a unique $W(2)$-subquadrangle $W_{L}^{i}$ containing $\sigma_{L}$ and $M$. One readily sees that the points of $M$ have labels $\bar{\bar{a}}$, $(b,c,i)$ and $(c,b,i)$. So, $M$ corresponds to a set of Type $(L7)$. Conversely, every set of Type $(L7)$ corresponds to a (necessarily unique) line of $Q(5,2) \otimes Q(5,2)$.

Consider next a line $M$ of $R_{L}$ which is disjoint from $\sigma_{L}$. Then $M$ intersects each $W_{L}^{i}$,
$i\in\mathbb{Z}_{3}$, in a unique point. Put $L=\{a,b,c\}$. The labels of the points of $M$ are $(u,u',i),\;(v,v',j),\;(w,w',k)$, where $\{ i,j,k \} = \{ 0,1,2 \}$, $\{u,v,w\}=\pi_{Q}(M)=\{a,b,c\}$, $\{u',v',w'\}=\pi_{Q}\circ\pi_{\overline{Q}}(M)=\{a,b,c\}$, $u\neq u',\;v\neq v',\;w\neq w'$. It readily follows that $\{(u,u',i),(v,v',j),(w,w',k)\}$ is a set of Type $(L8)$.
Conversely, one can readily verify that every set of Type $(L8)$ corresponds to a line of $Q(5,2)\otimes Q(5,2)$.

Finally, let $M$ be a line of $Q(5,2)\otimes Q(5,2)$ not belonging to $S^{\otimes}$ and contained in a quad of $T_{1}\setminus \{Q,\overline{Q},\overline{\overline{Q}}\}$. With $M$, there corresponds a set of the form $\{(a,u,i),(b,v,j),(c,w,k)\}$. We have that $\{a,b,c\}=\pi_{Q}(M)$ is a line of $Q$ not belonging to $S$.
Similarly, $\{u,v,w\}=\pi_{Q}\circ\pi_{\overline{Q}}(M)$ is a line of $Q$ not belonging to $S$. Moreover, we have that $au,bv,cw\in S$ and $j=i+\theta(au,bv)$, $k=i+\theta(au,cw)$ by the definition of the map $\theta$. So, $M$ corresponds to a set of Type $(L9)$. Conversely, we show that every set $\{(a,u,i),(b,v,j),(c,w,k)\}$ of Type $(L9)$ corresponds to a line of $Q(5,2)\otimes Q(5,2)$ not belonging to $S^{\otimes}$ and contained in a quad of $T_{1}\setminus
\{Q,\overline{Q},\overline{\overline{Q}}\}$. Let $x$ denote the point of $Q(5,2)\otimes Q(5,2)$ corresponding to $(a,u,i)$, let $Q_{1}$ denote the unique element of $T_{1}$ containing $x$ and let $M=\pi_{Q_{1}}(\{a,b,c\})$. Then $M$ corresponds to a set of the form $\{(a,u,i),(b,*,*),(c,*,*)\}$. Since $v,w,j,k$ are uniquely determined by $a,u,i,b,c$, this set is equal to $\{ (a,u,i),(b,v,j),(c,w,k) \}$.

By the above discussion, we indeed know that $Q(5,2)\otimes Q(5,2) \cong \mathcal{S}_{\theta}$.
\end{proof}

\bigskip \noindent \textbf{Definitions}. (1) An {\em admissible triple} is a triple $\Sigma = (\mathcal{L},G,\Delta)$, where:

$\bullet$ $G$ is a nontrivial additive group whose order $s+1$ is finite.

$\bullet$ $\mathcal{L}$ is a linear space, different from a point, in which each line is incident with exactly $s+1$ points. We denote the point set of $\mathcal{L}$ by $P$.

$\bullet$ $\Delta$ is a map from $P \times P$ to $G$ such that the following holds for any three points $x$, $y$ and $z$ of $\mathcal{L}$: $x$, $y$ and $z$ are collinear $\Leftrightarrow$ $\Delta(x,y) + \Delta(y,z) = \Delta(x,z)$.

(2) Suppose $\Sigma_1 = (\mathcal{L}_1,G_1,\Delta_1)$ and $\Sigma_2 = (\mathcal{L}_2,G_2,\Delta_2)$ are two admissible triples, where $\mathcal{L}_1$ and $\mathcal{L}_2$ are not lines. Then $\Sigma_1$ and $\Sigma_2$ are called {\em equivalent} if there exists an isomorphism $\alpha$ from $\mathcal{L}_1$ to $\mathcal{L}_2$, an isomorphism $\beta$ from $G_1$ to $G_2$ and a map $f$ from the point set of $\mathcal{L}_1$ to $G_1$ satisfying $\Delta_2(\alpha(x),\alpha(y)) = (f(x)+\Delta_1(x,y)-f(y))^\beta$ for all points $x$ and $y$ of $\mathcal{L}_1$.

\bigskip \noindent Let $\mathcal{L}_S$ denote the linear space whose points are the elements of $S$ and whose lines are the unordered triples of lines of $S$ which are contained in a grid, with incidence being containment. Then $\mathcal{L}_S$ is isomorphic to the affine plane $\mathrm{AG}(2,3)$ of order three. By Lemma \ref{lem3.2}, we know that $(\mathcal{L}_S,\mathbb{Z}_{3},\theta)$ is an admissible triple.

\begin{proposition} \label{prop3.4}
Let $\theta_{1}$ and $\theta_{2}$ be two maps from $S\times S$ to $\mathbb{Z}_{3}$ such that
$\Sigma_{1}=(\mathcal{L}_S,\mathbb{Z}_{3},\theta_{1})$ and $\Sigma_{2}=(\mathcal{L}_S,\mathbb{Z}_{3},\theta_{2})$ are
admissible triples. If $\Sigma_{1}$ and $\Sigma_{2}$ are equivalent, then $\mathcal{S}_{\theta_{1}} \cong \mathcal{S}_{\theta_{2}}$.
\end{proposition}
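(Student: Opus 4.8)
The plan is to upgrade the algebraic equivalence of the admissible triples to an explicit isomorphism between the two near hexagons. Write the equivalence data as $(\alpha,\beta,f)$: thus $\alpha$ is an automorphism of the affine plane $\mathcal{L}_S\cong\mathrm{AG}(2,3)$, $\beta$ is an automorphism of $\mathbb{Z}_3$, $f\colon S\to\mathbb{Z}_3$ is a map, and $\theta_2\big(\alpha(L),\alpha(M)\big)=\big(f(L)+\theta_1(L,M)-f(M)\big)^{\beta}$ for all $L,M\in S$; since $\beta$ is a group homomorphism, $(\,\cdot\,)^{\beta}$ respects addition, which I will use repeatedly.

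The key step, which I expect to be the only real obstacle, is to realise $\alpha$ geometrically: one needs an automorphism $\varphi$ of $Q(5,2)$ that stabilises the spread $S$ and induces $\alpha$ on $\mathcal{L}_S$. Equivalently, the stabiliser of $S$ in $\mathrm{Aut}(Q(5,2))$ must map onto the full automorphism group $\mathrm{AGL}(2,3)$ of $\mathcal{L}_S$; this follows from the structure of $\mathrm{Aut}(Q(5,2))$ together with the essential uniqueness of the spread of symmetry $S$ of $Q(5,2)$, cf.\ \cite{bdb:sos}. Everything after this point is bookkeeping.

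Given such a $\varphi$, I would define a map $\Psi$ from the point set of $\mathcal{S}_{\theta_1}$ to that of $\mathcal{S}_{\theta_2}$ by $x\mapsto\varphi(x)$ on points of Type $(P1)$, $\bar{x}\mapsto\overline{\varphi(x)}$ on Type $(P2)$, $\bar{\bar{x}}\mapsto\overline{\overline{\varphi(x)}}$ on Type $(P3)$, and $(x,y,i)\mapsto\big(\varphi(x),\varphi(y),\,i^{\beta}-f(xy)^{\beta}\big)$ on Type $(P4)$. Since $\varphi$ stabilises $S$, the pair $\big(\varphi(x),\varphi(y)\big)$ again consists of two distinct collinear points of $Q(5,2)$ with $\varphi(x)\varphi(y)=\varphi(xy)\in S$, so $\Psi$ is well defined; and $\Psi$ is a bijection, being bijective separately on points of Types $(P1),(P2),(P3)$, while on Type $(P4)$ the collineation $\varphi$ permutes the admissible pairs $(x,y)$ and, for each fixed $xy\in S$, the map $i\mapsto i^{\beta}-f(xy)^{\beta}$ permutes $\mathbb{Z}_3$. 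One would then check that $\Psi$ sends every line of $\mathcal{S}_{\theta_1}$ to a line of $\mathcal{S}_{\theta_2}$ of the same type. For Types $(L1)$--$(L4)$ this is immediate, since $\varphi$ is a collineation of $Q(5,2)$ stabilising $S$. For Types $(L5)$--$(L8)$ all the Type-$(P4)$ points occurring on the line have the same second entry $xy\in S$, so their third coordinates are shifted by a common constant $-f(xy)^{\beta}$; as $\beta$ permutes $\mathbb{Z}_3$ and a constant translation preserves $\mathbb{Z}_3$, the prescribed form of the line is preserved.

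The one genuine computation is for Type $(L9)$. For a line $\{(a,u,i),(b,v,j),(c,w,k)\}$ of Type $(L9)$ in $\mathcal{S}_{\theta_1}$ the lines $\{a,b,c\}$ and $\{u,v,w\}$ of $Q(5,2)$ are disjoint, $d(a,u)=d(b,v)=d(c,w)=1$, $au,bv,cw\in S$, and $j=i+\theta_1(au,bv)$, $k=i+\theta_1(au,cw)$. Under $\varphi$ all of this persists, with $au,bv,cw$ replaced by $\varphi(au),\varphi(bv),\varphi(cw)\in S$, and since $\varphi$ induces $\alpha$ we have $\varphi(au)=\alpha(au)$, etc., as points of $\mathcal{L}_S$. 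Using the additivity of $(\,\cdot\,)^{\beta}$ together with the equivalence identity, a short calculation (in which the $f(au)^{\beta}$-terms cancel) gives
\[
j^{\beta}-f(bv)^{\beta}=\big(i^{\beta}-f(au)^{\beta}\big)+\theta_2\big(\varphi(au),\varphi(bv)\big),\qquad k^{\beta}-f(cw)^{\beta}=\big(i^{\beta}-f(au)^{\beta}\big)+\theta_2\big(\varphi(au),\varphi(cw)\big),
\]
which is precisely the condition that $\big\{\Psi(a,u,i),\Psi(b,v,j),\Psi(c,w,k)\big\}$ is a line of Type $(L9)$ in $\mathcal{S}_{\theta_2}$. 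Hence $\Psi$ maps lines to lines, preserving types. Since $\Psi$ is injective on points and $\mathcal{S}_{\theta_1}$, $\mathcal{S}_{\theta_2}$ have equally many lines of each of the nine types (these counts depend only on $S$ and on $\mathbb{Z}_3$, not on $\theta$), $\Psi$ induces a bijection between the two line sets and is therefore an isomorphism $\mathcal{S}_{\theta_1}\to\mathcal{S}_{\theta_2}$. As noted, the crux of the argument is the lifting of $\alpha$ to $\varphi$; the Type-$(L9)$ verification is the only place where the precise form of the equivalence is used, and it reduces to a one-line identity in $\mathbb{Z}_3$.
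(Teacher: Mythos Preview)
Your proposal is correct and follows essentially the same route as the paper: lift the automorphism $\alpha$ of $\mathcal{L}_S$ to an automorphism $\varphi$ of $Q(5,2)$ stabilising $S$ (the paper cites \cite[Section~3, Example~1]{bdb:number} for this), and then define the isomorphism by $x\mapsto\varphi(x)$, $\bar{x}\mapsto\overline{\varphi(x)}$, $\bar{\bar{x}}\mapsto\overline{\overline{\varphi(x)}}$, $(a,b,i)\mapsto\big(\varphi(a),\varphi(b),(i-f(ab))^{\beta}\big)$, which is your $\Psi$ since $\beta$ is additive. The paper's proof simply asserts ``one readily verifies'' that this map is an isomorphism, whereas you have spelled out the type-by-type line check, including the only nontrivial case $(L9)$; your added detail is welcome but does not depart from the paper's argument.
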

\begin{proof}
Since $\Sigma_{1}$ and $\Sigma_{2}$ are equivalent, there exists an automorphism $\alpha$ of $\mathcal{L}_S$, an automorphism $\beta$ of $\mathbb{Z}_{3}$ and a map $f$ from $S$ to $\mathbb{Z}_{3}$ satisfying $\theta_{2}(\alpha(x),\alpha(y))=(f(x)+\theta_{1}(x,y)-f(y))^\beta$ for all points $x$ and $y$ of $\mathcal{L}_S$. There exists an automorphism $\phi$ of $Q$ such that $\alpha(L)=\phi(L)$ for every line $L$ of $S$, see e.g. \cite[Section 3, Example 1]{bdb:number}. One readily verifies that the map $x \mapsto x^\phi; \bar{x} \mapsto \overline{x^{\phi}}; \bar{\bar{x}} \mapsto
\overline{\overline{x^{\phi}}}; (a,b,i) \mapsto (a^{\phi},b^{\phi},(i-f(ab))^\beta)$ defines an isomorphism between
$\mathcal{S}_{\theta_1}$ and $\mathcal{S}_{\theta_2}$.
\end{proof}

\bigskip \noindent It is known that the affine plane $\mathrm{AG}(2,3)$ admits, up to equivalence, a unique admissible
triple. [This follows, for instance, from \cite[Theorem 2.1]{DB-T} and the fact that there exists a unique generalized quadrangle of order $(2,4)$, namely $Q(5,2)$, and a unique spread of symmetry in $Q(5,2)$.] If we coordinatize $\mathrm{AG}(2,3)$ in the standard way, then an admissible triple can be obtained by putting $\Delta[(x_1,y_1),(x_2,y_2)] := x_1 y_2 - x_2 y_1 \in \mathbb{Z}_3$.

\section{A non-abelian representation of the near hexagon $Q(5,2) \times \mathbb{L}_{3}$} \label{sec4}

The slim dense near hexagon $Q(5,2) \times \mathbb{L}_3$ is obtained by taking three isomorphic copies of $Q(5,2)$ and joining the corresponding points to form lines of size 3. In this section we prove that there exists a non-abelian representation of $Q(5,2) \times \mathbb{L}_{3}$.

Let $Q$ and $B$, respectively, be the point and line set of $Q(5,2)$. Set $\overline{Q}=\{\bar{x}:x\in Q\}$,
$\overline{\overline{Q}}=\{\bar{\bar{x}}:x\in Q\}$, $\overline{B}=\{\{\bar{x},\bar{y},\bar{z}\}: \{x,y,z\}\in B\}$ and $\overline{\overline{B}}=\{\{\bar{\bar{x}},\bar{\bar{y}},\bar{\bar{z}}\}: \{x,y,z\}\in B\}$. Then $(\overline{Q},\overline{B})$ and $(\overline{\overline{Q}},\overline{\overline{B}})$ are isomorphic to $Q(5,2)$. The near hexagon $Q(5,2)\times \mathbb{L}_{3}$ is isomorphic to the geometry whose point set $P$ is $Q \cup \overline{Q} \cup \overline{\overline{Q}}$ and whose line set $L$ is $B \cup \overline{B} \cup \overline{\overline{B}} \cup \{\{x,\bar{x},\bar{\bar{x}}\}:x\in Q\}$.

It is known that if $Q(5,2)\times \mathbb{L}_{3}$ admits a non-abelian representation, then the representation group must be the extra-special 2-group $2_{+}^{1+12}$ \cite[Theorem 1.6, p.199]{SS2}. Let $R=2_{+}^{1+12}$ with $R'=\{1,\lambda\}$. Set $V=R/R'$. Consider $V$ as a vector space over $\mathbb{F}_{2}$. The map $f:V\times V \to \mathbb{F}_{2}$ defined by
\begin{equation*}
f(xR',yR')=\left\{\begin{array}{ll}
  0 & \text{ if }[x,y]=1 \\
  1  & \text{ if }[x,y]=\lambda
\end{array}\right.
\end{equation*}
for $x,y\in R$, is a non-degenerate symplectic bilinear form on $V$ \cite[Theorem 20.4, p.78]{DH}. Write $V$ as an orthogonal direct sum of six hyperbolic planes $K_{i}$ ($1\leq i\leq 6$) in $V$ and let $H_{i}$ be the inverse image of $K_{i}$ in $R$ (under the canonical homomorphism $R \to R/R'$). Then each $H_{i}$ is generated by two involutions $x_{i}$ and $y_{i}$ such that $[x_{i},y_{i}] =\lambda$. Let $M=\langle x_{i}: 1\leq i\leq 6\rangle$ and $\overline{M}=\langle y_{i}: 1\leq i\leq 6\rangle$. Then $M$ and $\overline{M}$ are elementary abelian 2-subgroups of $R$ each of order $2^{6}$. Further, $M,\overline{M}$ and $Z(R)$ pairwise intersect trivially and $R = M \overline{M} Z(R)$. Also, $C_{M}(\overline{M})$ and $C_{\overline{M}}(M)$ are trivial.

We regard the points and lines of $Q$ as the points and lines of a nonsingular elliptic quadric of the projective space $\mathrm{PG}(M)$, where $M$ is regarded as a 6-dimensional vector space over $\mathbb{F}_2$. Let $(M,\tau)$ be the natural abelian representation of $(Q,B)$ associated with this embedding of $Q$ in $\mathrm{PG}(M)$. For every point $x$ of $Q$, put $m_x = \tau(x)$. There exists a unique non-degenerate symplectic bilinear form $g$ on $M$ such that $m_x^\perp = \langle m_y: y \in x^{\perp} \rangle$ for every point $x$ of $Q$, see e.g. \cite[Section 22.3]{HT}. Here, the following notational convention has been used: for every $m \in M$, $m^\perp$ denotes the set of all $m' \in M$ for which $g(m,m')=0$.

Now, let $m$ be an arbitrary element of $M$. If $m=1$, then we define $\overline{m} : = 1$. Suppose now that $m \not= 1$. Then $m^\perp$ is maximal in $M$, that is, of index 2 in $M$. So, the centralizer of $m^\perp$ in $\overline{M}$ is a subgroup $\langle \overline{m} \rangle$ of order 2. Since $m^\perp$ is maximal in $M$, $\langle m^\perp,m' \rangle = M$ for every $m' \in M \setminus m^\perp$. The triviality of $C_{\overline{M}}(M)$ then implies that $[\overline{m},m']=\lambda$ for every $m' \in M \setminus m^\perp$.

We prove that the map $M \to \overline{M}; m \mapsto \overline{m}$ is an isomorphism. This map is easily seen to be bijective. (Notice that $C_M(\overline{m}) = m^\perp$.) So, it suffices to prove that $\overline{m_1 m_2} = \overline{m_1} \ \overline{m_2}$ for all $m_1,m_2 \in M$. Clearly, this holds if $1 \in \{ m_1,m_2 \}$ or $m_1 = m_2$. So, we may suppose that $m_1 \not= 1 \not= m_2 \not= m_1$. The set $\{ m_1,m_2,m_1m_2 \}$ corresponds to a line of $\mathrm{PG}(M)$. So, for every $m \in (m_1m_2)^\perp$, $([\overline{m_1},m],[\overline{m_2},m])$ is equal to either $(1,1)$ or $(\lambda,\lambda)$. Then
$$[\overline{m_1} \ \overline{m_2},m]=[\overline{m_1},m][\overline{m_2},m]=1.$$
The first equality holds since $R$ has nilpotency class 2. Thus $\overline{m_1} \ \overline{m_2} \in C_{\overline{M}}((m_1m_2)^\perp) = \langle \overline{m_1m_2} \rangle$. Since $\overline{m_1} \ \overline{m_2} \not= 1$, we have $\overline{m_1} \ \overline{m_2} = \overline{m_1m_2}$.

We conclude that if we define $\overline{\tau}: \overline{Q} \to \overline{M}; \bar{x} \mapsto \overline{m_x}$ for every $x \in Q$, then $(\overline{M},\overline{\tau})$ is a faithful abelian representation of $(\overline{Q},\overline{B})$.

Now, let $m$ be an arbitrary element of $M$. If $m=1$, then we define $\overline{\overline{m}} : = 1$. If $m = m_x$ for some $x \in Q$, then we define $\overline{\overline{m}} := m \overline{m}$. If $m \not= 1$ and $m \not= m_x$, $\forall x \in Q$, then we define $\overline{\overline{m}} := m \overline{m} \lambda$. Since $m^2=\overline{m}^2=\lambda^2=[m,\overline{m}]=1$, $\overline{\overline{m}}$ is an involution. We prove that the map $m \mapsto \overline{\overline{m}}$ defines an isomorphism between $M$ and an elementary abelian 2-group $\overline{\overline{M}}$ of order $2^6$. Since $R = M \overline{M} Z(R)$, this map is injective and hence it suffices to prove that $\overline{\overline{m_1 m_2}} = \overline{\overline{m_1}} \ \overline{\overline{m_2}}$ for all $m_1,m_2 \in M$. Obviously, this holds if $1 \in \{ m_1,m_2 \}$ or $m_1=m_2$. So, we may suppose that $m_1 \not= 1 \not= m_2 \not= m_1$. The set $\{ m_1,m_2,m_1m_2 \}$ corresponds to a line of $\mathrm{PG}(M)$. Suppose $3-N$ elements of $\{ m_1,m_2,m_1m_2 \}$ correspond to points of $Q$. Then $m_1 \in m_2^\perp$ if and only if $N$ is even\footnote{Perhaps the case $N=3$ needs more explanation. Suppose $N=3$ and $m_1 \in m_2^\perp$. Then the hyperplane $\pi$ of $\mathrm{PG}(M)$ corresponding to $m_2^\perp$ intersects $Q$ in a nonsingular parabolic quadric $Q(4,2)$ of $\pi$. Since the point of $\mathrm{PG}(M)$ corresponding to $m_2$ is the kernel of $Q(4,2)$, the line of $\mathrm{PG}(M)$ corresponding to $\{ m_1,m_2,m_1m_2 \} \subset \pi$ must meet $Q(4,2)$, in contradiction with $N=3$.}. So, $[\overline{m_1},m_2]=\lambda^N$. If $N'$ is the number of elements of $\{ m_1,m_2 \}$ corresponding to points of $Q$, then $2-N'-N \in \{ -1,0 \}$ and $2-N'-N=0$ if and only if $m_1m_2$ corresponds to a point of $Q$. Hence, $\overline{\overline{m_1}} \ \overline{\overline{m_2}} = m_1 \ \overline{m_1} \ m_2 \ \overline{m_2} \ \lambda^{2-N'} = m_1 \ m_2 \ \overline{m_1} \ \overline{m_2} \ \lambda^{2-N'-N} = m_1m_2 \ \overline{m_1m_2} \  \lambda^{2-N'-N} = \overline{\overline{m_1m_2}}$.

So, if we define $\overline{\overline{\tau}}:\overline{\overline{Q}} \to \overline{\overline{M}}$ by putting $\overline{\overline{\tau}}(\bar{\bar{x}}) := \overline{\overline{m_x}} = m_x \overline{m_x}$ for all $x \in Q$, then $(\overline{\overline{M}},\bar{\bar{\tau}})$ is a faithful abelian representation of $(\overline{\overline{Q}},\overline{\overline{B}})$.

Now, define a map $\psi: P \to R$ which coincides with $\tau$ on $Q$, $\overline{\tau}$ on $\overline{Q}$ and $\overline{\overline{\tau}}$ on $\overline{\overline{Q}}$. Since $R = \langle M,\overline{M} \rangle$, $R = \langle \psi(P) \rangle$. By construction, $(R,\psi)$ also satisfies Property (R2) in the definition of representation. Hence, $(R,\psi)$ is a non-abelian representation of $Q(5,2) \times \mathbb{L}_{3}$.

\section{A non-abelian representation of the near hexagon $Q(5,2)\otimes Q(5,2)$} \label{sec5}

In this section, we prove that the slim dense near hexagon $Q(5,2) \otimes Q(5,2)$ has a non-abelian representation. By Proposition \ref{prop3.3}, this is equivalent with showing that the partial linear space $\mathcal{S}_\theta$ has a non-abelian representation, where $\theta$ is as defined in Section \ref{sec3}.

We continue with the notation introduced in Section \ref{sec3}. Let $M^\ast$ be a line of $S^\otimes$ contained in $R^\ast$ but distinct from $R^\ast \cap Q$, $R^\ast \cap \overline{Q}$ and $R^\ast \cap \overline{\overline{Q}}$. Then $M^\ast$ intersects each $W^i$, $i \in \{ 1,2,3 \}$, in a unique point. For every point $x$ of $L^\ast$, put $\epsilon(x) := i$ if the unique point of $M^\ast$ collinear with $x$ belongs to $W^i$. If $y \in Q \setminus L^\ast$, then we define $\epsilon(y) := \epsilon(x)$, where $x$ is the unique point of $L^\ast$ collinear with $y$.

\begin{lemma} \label{lem5.1}
Let $L_{1}$ and $L_{2}$ be two distinct lines in $S$ and let $\alpha_i \in L_i$, $i \in \{ 1,2 \}$. Then $\alpha_1 \sim \alpha_2$ if and only if $\epsilon(\alpha_2)-\epsilon(\alpha_1)=\theta(L_1,L_2)$.
\end{lemma}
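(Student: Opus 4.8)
First I would establish two elementary facts about $Q(5,2)\otimes Q(5,2)$, to be used repeatedly. \emph{(i)} If $Q_{1}\in T_{1}$, $R\in T_{2}$ and $z$ is a point of $Q_{1}$, then $\pi_{R}(z)$ is exactly the gate of $z$ in the line $Q_{1}\cap R$; in particular, for $R,R'\in T_{2}$ the projection isomorphism $R\to R'$ carries $R\cap Q_{1}$ onto $R'\cap Q_{1}$ for every $Q_{1}\in T_{1}$. (A short convexity argument, using that a $T_{1}$-quad meets a $T_{2}$-quad in a line.) \emph{(ii)} For two distinct, hence disjoint, lines $L,L'$ of $S$, the two maps $L\to L'$ and $L'\to L$ sending a point to its unique neighbour on the other line are mutually inverse bijections (the generalized quadrangle $Q$ has no triangles). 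Since $M^{*}$ meets each $W^{i}$ in a single point $\mu^{i}$ and $\mu^{0},\mu^{1},\mu^{2}$ are collinear with three distinct points of $L^{*}$, facts (i)--(ii) also show that $\epsilon$ restricts to a \emph{bijection} of each line $L\in S$ onto $\mathbb{Z}_{3}$: indeed $\epsilon|_{L}$ equals $\epsilon|_{L^{*}}$ composed with the neighbour bijection $L\to L^{*}$.

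The heart of the argument will be the transport identity
\[
\epsilon(\beta)=\epsilon(\alpha)+\theta(L_{1},L_{2}),\qquad\text{where }\alpha\in L_{1}\text{ and }\beta\text{ is the unique neighbour of }\alpha\text{ on }L_{2}.
\]
I would prove it with the aid of the map $\Phi:=\pi_{R^{*}}\circ\pi_{R_{L_{2}}}\circ\pi_{R_{L_{1}}}$ of $R^{*}$ --- the very map by which $\theta(L_{1},L_{2})$ was defined in Section~\ref{sec3}. Put $x:=\pi_{R^{*}}(\alpha)$ and $y:=\pi_{R^{*}}(\beta)$; by (i) these are the gates of $\alpha$ and $\beta$ in $L^{*}$, so $\epsilon(\alpha)=\epsilon(x)$ and $\epsilon(\beta)=\epsilon(y)$ by the definition of $\epsilon$. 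Using (i) and (ii) one checks that $\alpha=\pi_{R_{L_{1}}}(x)$ and $\beta=\pi_{R_{L_{2}}}(\alpha)$, hence $y=\pi_{R^{*}}(\beta)=\Phi(x)$. Now $\Phi$ is an automorphism of the generalized quadrangle $R^{*}$; it stabilizes the line $M^{*}=R^{*}\cap Q_{1}$ (with $Q_{1}\in T_{1}$ the quad determined by $M^{*}$), because by (i) each of the three projections composing $\Phi$ maps $T_{1}$-intersection lines to $T_{1}$-intersection lines; and by the definition of $\theta$ it sends $W^{i}$ to $W^{i+\theta(L_{1},L_{2})}$, hence $\mu^{i}=W^{i}\cap M^{*}$ to $\mu^{i+\theta(L_{1},L_{2})}$. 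Applying $\Phi$ to the collinearity $x\sim\mu^{\epsilon(x)}$ and using that $\Phi$ preserves collinearity yields $y\sim\mu^{\epsilon(x)+\theta(L_{1},L_{2})}$, that is $\epsilon(y)=\epsilon(x)+\theta(L_{1},L_{2})$, as required. The subcases $L_{1}=L^{*}$ or $L_{2}=L^{*}$ need no separate treatment: then $\Phi$ collapses to the identity of $R^{*}$, forcing $\theta(L_{1},L_{2})=0$, in agreement with Lemma~\ref{lem3.2} and with the identity.

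Granting the transport identity, the conclusion follows at once. As $L_{1}\neq L_{2}$ we have $\alpha_{1}\notin L_{2}$, so $\alpha_{1}\sim\alpha_{2}$ holds if and only if $\alpha_{2}$ is the unique neighbour $\beta$ of $\alpha_{1}$ on $L_{2}$; and since $\epsilon$ is injective on $L_{2}$, this is the case if and only if $\epsilon(\alpha_{2})=\epsilon(\beta)=\epsilon(\alpha_{1})+\theta(L_{1},L_{2})$, i.e.\ $\epsilon(\alpha_{2})-\epsilon(\alpha_{1})=\theta(L_{1},L_{2})$.

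The step I expect to be the main obstacle is fact (i) together with the two deductions drawn from it: that the projection of a point of a $T_{1}$-quad into a $T_{2}$-quad lands on their common line, and that consequently $\Phi$ stabilizes $M^{*}$ and induces on $\{\mu^{0},\mu^{1},\mu^{2}\}$ precisely the cyclic shift defining $\theta(L_{1},L_{2})$. Once this property --- that the projection maps respect the spread $S^{\otimes}$ --- is set up, the remainder is a short diagram chase inside the generalized quadrangle $R^{*}$.
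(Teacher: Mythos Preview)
Your proof is correct and follows essentially the same route as the paper's: both introduce the unique $L_2$-neighbour of $\alpha_1$, use the automorphism $\Phi = \pi_{R^*}\circ\pi_{R_{L_2}}\circ\pi_{R_{L_1}}$ of $R^*$ to track the $L^*$-gate of $\alpha_1$ to that of this neighbour, and then read off the $\epsilon$-shift from the cyclic action of $\Phi$ on the points $\mu^i = W^i\cap M^*$. Your write-up is more explicit about why $\Phi$ stabilises $M^*$ (the paper simply cites from Section~\ref{sec3} that $\Phi$ fixes every line of the spread $\{R^*\cap Q_1:Q_1\in T_1\}$), but the underlying argument is the same.

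One small slip: in your parenthetical remark, $\Phi$ does \emph{not} collapse to the identity when only one of $L_1,L_2$ equals $L^*$; for instance, if $L_1=L^*$ then $\Phi=\pi_{R^*}\circ\pi_{R_{L_2}}$ on $R^*$, and $\theta(L^*,L_2)$ need not vanish. You are nonetheless right that these subcases require no separate treatment --- your transport-identity argument goes through verbatim (with $x=\alpha$ when $L_1=L^*$, or $y=\beta$ when $L_2=L^*$).
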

\begin{proof}
Let $\alpha_2'$ be the unique point of $L_2$ collinear with $\alpha_1$, let $x_1$ and $x_2$ be the unique points of $L^\ast$ nearest to $\alpha_1$ and $\alpha_2'$, respectively, and let $z_i$, $i \in \{ 1,2 \}$, denote the unique point of $M^\ast$ collinear with $x_i$. The automorphism $R^\ast \to R^\ast; x \mapsto \pi_{R^\ast} \circ \pi_{R_{L_2}} \circ \pi_{R_{L_1}}(x)$ of $R^\ast$ maps $x_1$ to $x_2$ and hence $z_1$ to $z_2$. This implies that $W^{\epsilon(x_1) + \theta(L_1,L_2)} = W^{\epsilon(x_2)}$, i.e. $\theta(L_1,L_2) = \epsilon(x_2) - \epsilon(x_1) = \epsilon(\alpha_2') - \epsilon(\alpha_1)$. Hence, $\alpha_1 \sim \alpha_2$ if and only if $\alpha_2 = \alpha_2'$, i.e. if and only if $\epsilon(\alpha_2)-\epsilon(\alpha_1)=\theta(L_1,L_2)$.
\end{proof}

\begin{lemma} \label{lem5.2}
Let $N=2_{-}^{1+6}$ with $N'=\{1,\lambda\}$ and let $I_{2}(N)$ be the set of involutions in $N$. Then there exists a map $\delta$ from $Q$ to $I_{2}(N)$ satisfying the following:
\begin{enumerate}
\item[$(i)$] $\delta $ is one-one.
\item[$(ii)$] For $x,y\in Q$, $[\delta(x),\delta (y)]=1$ if and only if $y\in x^{\perp}$.
\item[$(iii)$] If $x,y\in Q$ with $x\sim y$, then
\begin{equation*}
\delta(x\ast y)=\left\{\begin{array}{ll}
  \delta(x)\delta(y) & \text{ if }xy\in S \\
  \delta(x) \delta (y)\lambda & \text{ if }xy\notin S
\end{array}.\right.
\end{equation*}
\item[$(iv)$] The image of $\delta$ generates $N$.
\end{enumerate}
\end{lemma}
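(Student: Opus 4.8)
The plan is to build $N$ as the extraspecial group attached to the nonsingular elliptic quadric model of $Q(5,2)$, and to obtain $\delta$ as a lift of the associated abelian representation corrected by an $\mathbb{F}_{2}$-valued function on the points. Regard $Q(5,2)$ as a nonsingular elliptic quadric of $\mathrm{PG}(M)$, where $M$ is a $6$-dimensional vector space over $\mathbb{F}_{2}$, with defining (minus type) quadratic form $q$; write $m_{x}\in M$ for the point representing $x\in Q$, and let $g$ be the polarization of $q$, so that $g(m_{x},m_{y})=0$ if and only if $y\in x^{\perp}$. Take $N:=2_{-}^{1+6}$ realised so that $N/N'=M$ and a lift of $v\in M$ is an involution exactly when $q(v)=0$; this determines $N$ up to isomorphism. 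Fix a set map $s\colon M\to N$ lifting the quotient, with $s(0)=1$, and write $s(v)s(w)=s(v+w)\lambda^{\beta(v,w)}$, so that $\beta(v,v)=q(v)$ and $\beta(v,w)+\beta(w,v)=g(v,w)$. For a line $\ell=\{x,y,z\}$ of $Q(5,2)$ the element $s(m_{x})s(m_{y})s(m_{z})$ is central and does not depend on the order of the three factors (the $m$'s pairwise $g$-commute and $q(m_{z})=0$); write it $\lambda^{\beta'(\ell)}$ with $\beta'(\ell)\in\mathbb{F}_{2}$.

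Now set $\delta(x):=s(m_{x})\lambda^{\kappa(x)}$ for a function $\kappa\colon Q\to\mathbb{F}_{2}$ still to be chosen; this is in any case the only possible shape of $\delta$, since $\delta$ descends to an abelian representation of $Q(5,2)$, which by injectivity and spanning must be the quadric embedding. For any $\kappa$: $\delta(x)$ is an involution, as $q(m_{x})=0$ and $m_{x}\ne 0$; $[\delta(x),\delta(y)]=\lambda^{g(m_{x},m_{y})}$, giving (ii); $x\mapsto m_{x}$ is injective, hence so is $\delta$, giving (i); and the $m_{x}$ span $M$ while some $g(m_{x},m_{y})=1$, so $\langle\delta(Q)\rangle$ surjects onto $N/N'$ and is nonabelian, hence equals $N$, giving (iv). A direct computation turns (iii) into the linear system
\[
\kappa(x)+\kappa(y)+\kappa(z)=\beta'(\ell)+h(\ell)\qquad\text{for every line }\ell=\{x,y,z\}\text{ of }Q(5,2),
\]
where $h(\ell)=0$ if $\ell\in S$ and $h(\ell)=1$ otherwise. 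Thus everything reduces to showing that $\phi:=\beta'+h$, as a function on the line set $\mathcal{B}$ of $Q(5,2)$, lies in the image of $\partial\colon\mathbb{F}_{2}^{Q}\to\mathbb{F}_{2}^{\mathcal{B}}$, $(\partial\kappa)(\ell)=\sum_{p\in\ell}\kappa(p)$; equivalently, that $\sum_{\ell\in W}\phi(\ell)=0$ for every set $W$ of lines of $Q(5,2)$ covering each point an even number of times. (The $S$-twist is forced here: with $h=0$ the system would demand $\beta'\in\mathrm{im}\,\partial$, which fails on the subgrids discussed next.)

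The decisive even covers are the $(3\times 3)$-subgrids $G$, whose six lines cover each of their nine points exactly twice. Two facts combine. First, $\sum_{\ell\in G}\beta'(\ell)$ is independent of the section $s$ and transforms trivially under $O(M,q)$, which acts transitively on subgrids; so it is a constant, which one computes once — inside the $2_{+}^{1+4}$ lying over the $4$-space spanned by $G$ — to be $1$. Second, every $(3\times 3)$-subgrid of $Q(5,2)$ meets the spread of symmetry $S$ in an odd number of lines: $Q(5,2)$ has exactly $120$ subgrids, each line of $S$ lies in $16$ of them, the $12$ subgrids with three spread lines are precisely the lines of $\mathcal{L}_{S}\cong\mathrm{AG}(2,3)$, and no subgrid contains exactly two spread lines (two spread lines, being disjoint, lie in one parallel class of the subgrid through them, forcing the third); counting incidences, $9\cdot 16=12\cdot 3+108\cdot 1$ and $12+108=120$, so no subgrid is disjoint from $S$. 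Hence $\sum_{\ell\in G}\phi(\ell)=1+1=0$ for every subgrid, the two defects cancelling.

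The main obstacle is to pass from the subgrids to all even covers: one must show that the subgrid covers span the space of even covers of $Q(5,2)$ (that is, that the $\mathrm{Aut}(Q(5,2))$-submodule they generate is everything), from which $\phi\in\mathrm{im}\,\partial$ follows; lacking a clean module-theoretic argument, I would instead produce an explicit spanning family of even covers and verify $\sum_{\ell\in W}\phi(\ell)=0$ on each. Once $\phi\in\mathrm{im}\,\partial$ is established, any solution $\kappa$ of the linear system yields, via $\delta(x)=s(m_{x})\lambda^{\kappa(x)}$, a map with all four required properties.
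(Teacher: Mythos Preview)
Your cohomological reduction is correct and quite different from the paper's proof. Realising $N$ over $(M,q)$, writing any candidate as $\delta(x)=s(m_x)\lambda^{\kappa(x)}$, and translating (iii) into $\partial\kappa=\beta'+h$ is the right structural picture; (i), (ii), (iv) do hold for every $\kappa$; and your subgrid computation $\sum_{\ell\in G}(\beta'+h)(\ell)=1+1=0$ is correct, including the invariance argument giving $\sum_{\ell\in G}\beta'(\ell)=1$ and the parity count showing every subgrid meets $S$ in $1$ or $3$ lines. The paper does none of this: it fixes the $\{1,\dots,6\}$-model of $Q(5,2)$, writes $N=\langle a,b\rangle\circ\langle c,d\rangle\circ Q_8$, tabulates all $27$ values of $\delta$ explicitly, and then verifies (i)--(iv) by organised direct checking.

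But the proposal is incomplete exactly where you say it is. You need the $120$ subgrid vectors to span the $24$-dimensional space $\ker\partial^{T}$ of even covers, and you neither prove this nor carry out the promised alternative of exhibiting a spanning family and evaluating $\phi$ on it. This is not a triviality one can wave through: two intersecting lines of a GQ never lie in a common subgrid, so there is no local induction available, and a module-theoretic argument would require identifying the composition factors of $\ker\partial^{T}$ under $\mathrm{Aut}(Q(5,2))$. Closing the gap by either route costs real computation --- about as much, in fact, as the paper's explicit table, which is after all just one explicit $\kappa$ in your language. Your framework explains nicely \emph{why} such a $\delta$ should exist, but as written it does not establish that it does.
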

\begin{proof}[\bf Proof.]
We use a model for the generalized quadrangle $Q \cong Q(5,2)$ which is described in \cite[Section 6.1, pp.101--102]{PT}. Put $\Omega = \{1,2,3,4,5,6\}$ and $\Omega' = \{1',2',3',4',5',6'\}$. Let $\mathcal{E}$ be the set of all 2-subsets of $\Omega$ and let $\mathcal{F}$ be the set of all partitions of $\Omega$ in three 2-subsets of $\Omega$. Then the point set of $Q$ can be identified with the set $\mathcal{E} \cup \Omega \cup \Omega'$ and the line set of $Q$ can be identified with the set $\mathcal{F} \cup \{ \{ i,\{ i,j \},j' \} : 1 \leq i , j \leq 6, \ i \neq j \}$. Now, consider the following nine lines of $Q$:
\begin{eqnarray*}
L_{1} = \{ \{ 1,2 \},\{ 3,4 \},\{ 5,6 \} \}; \text{ } L_{2}=\{ \{ 1,4 \},1,4' \}; \text{ }L_{3}=\{\{2,6\},2,6' \};\\
L_{4}=\{\{1,6\},\{2,4\},\{3,5\} \}; \text{ }L_{5}= \{\{1,5\},1',5 \}; \text{ } L_{6}=\{\{2,3\},2',3 \};\\
L_{7}= \{\{1,3\},\{2,5\},\{4,6\} \};\text{ }L_{8}= \{\{3,6\},3',6 \}; \text{ }L_{9}= \{\{4,5\},4,5' \}.
\end{eqnarray*}
These 9 lines are mutually disjoint and hence determine a spread $S'$ of $Q$. Any two distinct lines $L_i$ and $L_j$ of $S'$ are contained in a unique $(3 \times 3)$-subgrid and the unique line of this subgrid disjoint from $L_i$ and $L_j$ also belongs to $S'$. A spread of $Q(5,2)$ having this property is called {\em regular}. Since any regular spread of $Q(5,2)$ is also a spread of symmetry \cite[Section 7.1]{bdb:sos}, and there exists up to isomorphism a unique spread of symmetry in $Q(5,2)$, we may without loss of generality suppose that $S = S'$.

Put $N= \langle a,b \rangle \circ \langle c,d \rangle \circ Q_{8}$, where $a,b,c,d$ are involutions and $\langle a,b \rangle \cong \langle c,d \rangle \cong D_{8}$. So, $[a,b]=[c,d]=\lambda$. Take $Q_{8}= \{1,\lambda,i,j,k,i \lambda,j \lambda,k \lambda \}$, where $i^{2}=j^{2}=k^{2}=\lambda$, $ij=k$, $jk=i$, $ki=j$ and $[i,j]=[j,k]=[k,i]=\lambda$. We define $\delta: Q \to I_{2}(N)$ as follows:
\begin{enumerate}
\item[] $\delta(\{ 1,2 \})=a$, $\delta(\{ 3,4 \})=c$, $\delta(\{ 5,6 \})=ac$,\\
$\delta(\{ 1,4 \})=abdi$, $\delta(1)=cdj$, $\delta(4')=abck\lambda$,\\
$\delta(\{ 2,6 \})=abi\lambda$, $\delta(2)=acdk$, $\delta(6')=bcdj\lambda$,\\
$\delta(\{ 1,6 \})=b$, $\delta(\{ 2,4 \})=bd$, $\delta(\{ 3,5 \})=d$,\\
$\delta(\{ 1,5 \})=abci$, $\delta(1')=cdk\lambda$, $\delta(5)=abdj$,\\
$\delta(\{ 2,3 \})=bcdi\lambda$, $\delta(2')=acdj\lambda$, $\delta(3)=abk$,\\
$\delta(\{ 1,3 \})=abcd\lambda$, $\delta(\{ 2,5 \})=bc\lambda$, $\delta(\{ 4,6 \})=ad\lambda$,\\
$\delta(\{ 3,6 \})=acdi\lambda$, $\delta(3')=abj\lambda$, $\delta(6)=bcdk$,\\
$\delta(\{ 4,5 \})=cdi$, $\delta(4)=abcj$, $\delta(5')=abdk\lambda$.
\end{enumerate}
Put $W = N/N'$. Suppose $\{ x_1,x_2,\ldots,x_6 \}$ is a set of 6 points of $Q$ such that the smallest subspace $[x_1,x_2,\ldots,x_6]$ of $Q$ containing $\{ x_1,x_2,\ldots,x_6 \}$ coincides with $Q$. If $\tau$ is an abelian representation of $Q$ in $W$, then by Property (R1) in the definition of representation, $W = \langle \tau(x_1),\ldots,\tau(x_6) \rangle$ and hence $\{ \tau(x_1),\ldots,\tau(x_6) \}$ is a basis of $W$ (regarded as $\mathbb{F}_2$-vector space). Conversely, if $\{ w_1,\ldots,w_6 \}$ is a basis of $W$, then the map $x_i \mapsto w_i$, $i \in \{ 1,\ldots,6 \}$, can be extended to a unique abelian representation $\tau$ of $Q$ in $W$. (Since there exists an abelian representation of $Q$ in $W$, there must exist an abelian representation $\tau$ for which $\tau(x_i)=w_i$, $i \in \{ 1,\ldots,6 \}$. The uniqueness of $\tau$ follows from the fact that $\tau(y_1 \ast y_2) = \tau(y_1) \tau(y_2)$ for any two distinct collinear points $y_1$ and $y_2$ of $Q$.) Consider now the special case where $x_1 = \{ 1,2  \}$, $x_2 = \{ 3,4 \}$, $x_3 = \{ 3,5 \}$, $x_4 = \{ 1,6 \}$, $x_5 = \{ 4,5 \}$, $x_6=1$, $w_1=a N'$, $w_2 = c N'$, $w_3=d N'$, $w_4 = b N'$, $w_5 = cdi N'$ and $w_6 = cdj N'$. One indeed readily verifies that $[x_1,\ldots,x_6]=Q$ and that $\{ w_1,\ldots,w_6 \}$ is a basis of the $\mathbb{F}_2$-vector space $W$. Let $\delta'$ denote the unique abelian representation of $Q$ in $W$ for which $\delta'(x_i)=w_i$, $i \in \{ 1,\ldots,6 \}$. Then, using the fact that $\delta'(y_1 \ast y_2) = \delta'(y_1) \delta'(y_2)$ for any two distinct collinear points $y_1$ and $y_2$ of $Q$, one can verify that $\delta'(y) = \delta(y) N'$ for every $y \in Q$. This implies that $\delta(y_1 \ast y_2)$ is equal to either $\delta(y_1) \delta(y_2)$ or $\delta(y_1) \delta(y_2) \lambda$ for any two distinct collinear points $y_1$ and $y_2$ of $Q$.

Clearly, the map $\delta: Q \to I_2(N)$ satisfies the properties (i) and (iv) of the lemma. We will now prove that also property (ii) of the lemma is satisfied. So, if $\{ y_1,y_2 \}$ is one of the 351 unordered pairs of distinct points of $Q$, then we need to prove that $[\delta(y_1),\delta(y_2)]=1$ if and only if $y_1 \in y_2^\perp$. Since $Q=[x_1,\ldots,x_6]$, it suffices to prove the following three statements: (I) the above claim holds if $\{ y_1,y_2 \} \subseteq \{ x_1,\ldots,x_6 \}$; (II) if $[\delta(y_1),\delta(y_2)]=1$ for some distinct collinear points $y_1$ and $y_2$, then also $[\delta(y_1),\delta(y_1 \ast y_2)]=1$; (III) if the above claim holds for unordered pairs $\{ y_1,y_2 \}$ and $\{ y_1,y_3 \}$ of points where $y_2 \sim y_3$ and $y_1 \not\in y_2y_3$, then it also holds for the unordered pair $\{ y_1,y_2 \ast y_3 \}$. Statement (I) is easily verified by considering all 15 pairs $\{ x_i,x_j \}$ where $i,j \in \{ 1,\ldots,6 \}$ with $i \not= j$. As to Statement (II), notice that $[\delta(y_1),\delta(y_1 \ast y_2)]$ is equal to either $[\delta(y_1),\delta(y_1) \delta(y_2)]$ or  $[\delta(y_1),\delta(y_1) \delta(y_2) \lambda]$ which is in any case equal to 1. We now prove Statement (III). Since $\delta(y_2 \ast y_3)$ is equal to either $\delta(y_2) \delta(y_3)$ or $\delta(y_2) \delta(y_3) \lambda$, we have $[\delta(y_1),\delta(y_2 \ast y_3)] = [\delta(y_1),\delta(y_2) \delta(y_3)] = [\delta(y_1),\delta(y_2)] [\delta(y_1),\delta(y_3)]$. If $y_1$ is collinear with precisely one of $y_2,y_3$, then $y_1$ is not collinear with $y_2 \ast y_3$ and $[\delta(y_1),\delta(y_2 \ast y_3)] = [\delta(y_1),\delta(y_2)] [\delta(y_1),\delta(y_3)] = 1 \cdot \lambda = \lambda$. If $y_1$ is collinear with $y_2 \ast y_3$, then $[\delta(y_1),\delta(y_2 \ast y_3)] = [\delta(y_1),\delta(y_2)] [\delta(y_1),\delta(y_3)] = \lambda \cdot \lambda = 1$. So, this proves Statement (III) and finishes the proof of property (ii) of the lemma.

Property (iii) of the lemma is verified by considering all 45 lines $L$ of $Q$ and an ordered pair $(x,y)$ of distinct points of $L$. Notice that by property (ii) of the lemma, we only need to consider one ordered pair $(x,y)$ for each line $L$ of $Q$.
\end{proof}

It is known that if the near hexagon $Q(5,2)\otimes Q(5,2)$ admits a non-abelian representation, then the representation group must be the extra-special 2-group $2_{-}^{1+18}$ \cite[Theorem 1.6, p.199]{SS2}. We next construct a non-abelian representation of $\mathcal{S}_{\theta}\cong Q(5,2)\otimes Q(5,2)$ in the group
$2_{-}^{1+18}$.

Let $R=2_{-}^{1+18}$ with $R'=\{1,\lambda\}$. Write $R$ as a central product $R=M \circ N$, where $M=2_{+}^{1+12}$ and $N=2_{-}^{1+6}$. Let $Y= Q \cup \overline{Q} \cup \overline{\overline{Q}}$. Then the subgeometry of $\mathcal{S}_{\theta}$ whose point set is $Y$ together with the lines of types $(L1)-(L4)$ is isomorphic to
$Q(5,2) \times \mathbb{L}_{3}$. Let $P$ be the point set of $\mathcal{S}_{\theta}$ and let $\delta$ be a map from $Q$ to $I_{2}(N)$ satisfying the conditions of Lemma \ref{lem5.2}. We extend $\delta$ to the set $P \setminus Y$ using the map $\epsilon: Q \to \mathbb{Z}_{3}$ which we defined in the beginning of this section:
\begin{quote}
For $L_{1}\in S$, distinct points $a,b \in L_{1}$ and $j\in \mathbb{Z}_{3}$, we define $\delta(a,b,j) := \delta(u)$, where $u$ is the unique point of $L_1$ with $\epsilon(u)=j$.
\end{quote}
Now, fix a non-abelian representation $(M,\phi)$ of $Y$. Such a representation exists by Section \ref{sec4}. Let $\psi$ be the following map from $P$ to $R$:
\begin{enumerate}
\item[$\bullet$] if $q\in Y$, then $\psi(q) := \phi(q)$;
\item[$\bullet$] if $q=(a,b,i) \in P \setminus Y$, then $\psi(q) = \psi(a,b,i) := \phi(b) \phi(\bar{a}) \delta(a,b,i)$.
\end{enumerate}
We prove the following.

\begin{theorem} \label{theo5.3}
$(R,\psi)$ is a non-abelian representation of $\mathcal{S}_{\theta}$.
\end{theorem}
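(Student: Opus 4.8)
The plan is to verify the three defining conditions of a representation for $(R,\psi)$: namely that $\psi$ maps $P$ into the involutions of $R$, that $R = \langle \psi(P) \rangle$, and that $\psi$ respects each line in the sense of (R2), i.e.\ $\psi$ is injective on every line and the product of two of its points on a line is the third. The non-abelianness will be automatic once we know $R \cong 2^{1+18}_-$ is a representation group, since $R$ is non-abelian; alternatively it follows because $(M,\phi)$ is already non-abelian and $M \leq R$. The generation condition (R1) is immediate: $\psi$ restricted to $Y$ is $\phi$, which generates $M$, and the points $(a,b,i)$ contribute, via $\phi(b)\phi(\bar a)\delta(a,b,i)$, the factors $\delta(a,b,i) \in N$; since the $\delta$-values generate $N$ by Lemma~\ref{lem5.2}(iv) and $\phi(b),\phi(\bar a) \in M$, we get $R = M \circ N = \langle \psi(P)\rangle$. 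So the real content is checking that each $\psi(q)$ is an involution and that (R2) holds on all nine line types.

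For the involution property, on $Y$ this is inherited from $\phi$. For $q = (a,b,i)$, we have $\psi(q) = \phi(b)\phi(\bar a)\delta(a,b,i)$ where $\phi(b),\phi(\bar a) \in M$ and $\delta(a,b,i) \in N$. Since $M$ and $N$ commute elementwise in the central product (modulo $R' = N' = \{1,\lambda\}$, which is central), $\psi(q)^2 = \phi(b)^2\phi(\bar a)^2[\phi(b),\phi(\bar a)]^{?}\,\delta(a,b,i)^2$; I would compute this carefully using that $b$ and $\bar a$ lie at distance $\le 2$ in $Q(5,2)\times\mathbb{L}_3$ so $[\phi(b),\phi(\bar a)] = 1$, giving $\psi(q)^2 = 1$. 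Here one uses the key fact (cited from \cite[Prop.\ 4.1]{SS2}) that in a non-abelian representation of a slim dense near hexagon, two images fail to commute exactly when the points are at distance~$3$; $b$ and $\bar a$ are never at distance~$3$ in $Y \cong Q(5,2)\times\mathbb{L}_3$.

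The main work — and the main obstacle — is verifying (R2) for the line types $(L5)$ through $(L9)$, which mix $Y$-points with $(a,b,i)$-points and require the combinatorial input of Lemmas~\ref{lem5.1} and~\ref{lem5.2} together with the definition of $\theta$. For a line of type $(L5)$, say $\{a,(a,b,i),(a,c,i)\}$ with $\{a,b,c\} \in S$, one must check $\phi(a)\cdot\phi(b)\phi(\bar a)\delta(a,b,i) = \phi(c)\phi(\bar a)\delta(a,c,i)$; using $\phi(a)\phi(b)\phi(c) = \lambda^{?}$ from the $Q(5,2)$-representation on a line \emph{not} in the chosen structure and the fact that $\{a,b,c\}\in S$ forces $\delta(a*b*c)$-type relations from Lemma~\ref{lem5.2}(iii) with the "$xy \in S$" branch (no $\lambda$), the $\lambda$'s should cancel. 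Types $(L6)$ and $(L7)$ are analogous, using the isomorphisms $\overline{\tau}$, $\overline{\overline{\tau}}$ of Section~\ref{sec4}. Type $(L8)$, $\{(a,b,i),(b,c,j),(c,a,k)\}$ with $\{i,j,k\}=\mathbb{Z}_3$ and $\{a,b,c\}\in S$, is where $\delta(a,b,i) = \delta(u)$ for the point $u \in \{a,b,c\}$ with $\epsilon(u)=i$ really bites: one expands the product $\psi(a,b,i)\psi(b,c,j)$, collects the $M$-part $\phi(b)\phi(\bar a)\phi(c)\phi(\bar b)$ and the $N$-part $\delta(u_i)\delta(u_j)$, and must match it to $\psi(c,a,k) = \phi(a)\phi(\bar c)\delta(u_k)$; since $\{i,j,k\} = \mathbb{Z}_3$ means $\{u_i,u_j,u_k\} = \{a,b,c\}$ is the full line of $S$, Lemma~\ref{lem5.2}(iii) gives $\delta(u_i)\delta(u_j)\delta(u_k) = \delta(u_i * u_j)\delta(u_k) = \lambda^{0\text{ or }1}$ and the $M$-side collapses by the line relations in the $Q(5,2)$-representations. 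Finally type $(L9)$ is the most delicate: for $\{(a,u,i),(b,v,j),(c,w,k)\}$ with the distance and spread conditions and $j = i+\theta(au,bv)$, $k = i+\theta(au,cw)$, I would use Lemma~\ref{lem5.1} — which translates $\theta(L_1,L_2) = \epsilon(\alpha_2)-\epsilon(\alpha_1)$ into collinearity — to identify which $\delta$-values appear, then exploit that $\{a,b,c\}$ and $\{u,v,w\}$ are lines of $Q$ \emph{not} in $S$ (so Lemma~\ref{lem5.2}(iii) contributes a $\lambda$ on each) and that these extra $\lambda$'s are precisely compensated by the $\lambda$'s arising from $[\phi(b),\phi(\bar a)]$-type commutators and from the line relations among $\phi$-images of non-$S$ lines in the two embedded copies of $Q(5,2)$. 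The bookkeeping of $\lambda$-powers across the $M$-part and $N$-part is the crux; I expect that once one sets up a clean sign-counting lemma (tracking, for each of the nine line types, how many $\lambda$'s come from the grid/non-grid status of the relevant lines in $Q$, from the $\epsilon$/$\theta$ relation, and from pairwise commutators), every case reduces to a finite check and the identity $\psi(x)\psi(y) = \psi(x*y)$ falls out. Injectivity of $\psi$ on each line follows from the distance characterization: two distinct points on a common line are collinear, hence at distance~$1$, hence have commuting and distinct images (if they were equal, their product would be $1$, not the third point).
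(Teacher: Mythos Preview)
Your plan is correct and matches the paper's proof: establish generation from $\phi$ and $\delta$, use $d_Y(b,\bar a)=2$ to show each $\psi(q)$ is an involution, and then verify (R2) line type by line type via Lemmas~\ref{lem5.1} and~\ref{lem5.2}.

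Two spots where your $\lambda$-bookkeeping is off and would confuse the execution. For $(L5)$--$(L7)$ the key point is much simpler than you suggest: by definition $\delta(a,b,i)$ and $\delta(a,c,i)$ are both equal to $\delta(u)$ for the unique $u\in\{a,b,c\}$ with $\epsilon(u)=i$, so they coincide and the check reduces to $\phi(b)\phi(\bar a)\phi(c)\phi(\bar a)=\phi(a)$; no appeal to Lemma~\ref{lem5.2}(iii) and no $\lambda$'s occur. For $(L9)$ your attribution of $\lambda$'s is wrong: the lines $\{a,b,c\}$ and $\{u,v,w\}$ appear only through $\phi$, and since $(M,\phi)$ is a genuine representation of $Y$ one has $\phi(u)\phi(v)=\phi(w)$ and $\phi(\bar a)\phi(\bar b)=\phi(\bar c)$ with \emph{no} $\lambda$, regardless of whether these lines lie in $S$. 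The single $\lambda$ on the $N$-side comes from Lemma~\ref{lem5.2}(iii) applied to the line $\{\alpha,\beta,\gamma\}\notin S$ (where $\alpha\in au$, $\beta\in bv$, $\gamma\in cw$ are the $\epsilon$-preimages of $i,j,k$, shown collinear via Lemma~\ref{lem5.1}), and it is cancelled by the single commutator $[\phi(\bar a),\phi(v)]=\lambda$ arising because $d_Y(\bar a,v)=3$.
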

\begin{proof}[\bf Proof.]
Since the image of $\phi$ generates $M$ and the image of $\delta$ generates $N$, we have $R=\langle\psi(P)\rangle$. For every line
$L_{1}\in S$ and distinct $a,b\in L_{1}$, we have
$[\phi(a),\phi(\bar{b})]=1$, since $a$ and $\bar{b}$ are in
distance two from each other. This implies that $\psi(q)$ is an
involution for every $q\in P$. We need to verify condition
$(R2)$ in the definition of representation. This is true
for all lines of types $(L1)-(L4)$, since they are also lines of $Y$ and $\psi$ coincides with $\phi$ on $Y$.

Let $\{a,(a,b,i),(a,c,i)\}$ be a line of type $(L5)$. Since
$\delta(a,b,i)=\delta(a,c,i)$, we have $\psi(a,b,i)\psi(a,c,i)
=\phi(b)\phi(\bar{a})\phi(c)\phi(\bar{a})=\phi(b)\phi(c)=\phi(a)=\psi(a)$.
Similar argument holds for lines of types $(L6)$ and $(L7)$.

Next, consider a line $\{(a,b,i),(b,c,j),(c,a,k)\}$ of type $(L8)$. We have
$\psi(a,b,i) \psi(b,c,j)=\phi(b)\phi(\bar{a})\phi(c)\phi(\bar{b})\delta(a,b,i)\delta(b,c,j)$.
Since $\{i,j,k\}=\mathbb{Z}_{3}$, $\{\delta(a,b,i),\delta(b,c,j),\delta(c,a,k)\}=\{\delta(a),\delta(b),\delta(c)\}$.
Since $\{a,b,c\}\in S$, Lemma \ref{lem5.2}$(iii)$ implies that $\delta(a,b,i)\delta(b,c,j)=\delta(c,a,k)$. So,
$\psi(a,b,i)\psi(b,c,j) = \phi(b)\phi(c)\phi(\bar{a})\phi(\bar{b})\delta(c,a,k)
=\phi(a)\phi(\bar{c})\delta(c,a,k)=\psi(c,a,k)$. Notice that the second
equality holds since $\{a,b,c\}$ and $\{\bar{a},\bar{b},\bar{c}\}$
are lines of $Y$.

Finally, consider a line $\{(a,u,i),(b,v,j),(c,w,k) \}$ of type
$(L9)$. Here the lines $au,bv,cw$ are in $S$, $j=i+\theta(au,bv)$
and $k=i+\theta(au,cw)$. Let
$\delta(a,u,i)=\delta(\alpha),\delta(b,v,j)=\delta(\beta)$ and
$\delta(c,w,k)=\delta(\gamma)$, where $\alpha\in au, \beta\in bv$
and $\gamma\in cw$. So $\epsilon(\alpha)=i,\epsilon(\beta)=j$ and
$\epsilon(\gamma)=k$. Since
$\epsilon(\beta)-\epsilon(\alpha)=j-i=\theta(au,bv)$, Lemma
\ref{lem5.1} implies that $\alpha\sim \beta$. Similarly,
$\alpha\sim \gamma$. Thus $\{\alpha,\beta,\gamma\}$ is a line of
$Q$ not contained in $S$. Then by Lemma
\ref{lem5.2}$(iii)$,
$\delta(a,u,i)\delta(b,v,j)=\delta(\alpha)\delta(\beta)=\delta(\gamma)\lambda=\delta(c,w,k)\lambda$.
So
$\psi(a,u,i)\psi(b,v,j)=\phi(u)\phi(\bar{a})\phi(v)\phi(\bar{b})\delta(c,w,k)\lambda$.
Since $v$ and $\bar{a}$ are in distance three from each other,
$[\phi(\bar{a}),\phi(v)]=\lambda$. So
$\phi(\bar{a})\phi(v)=\phi(v)\phi(\bar{a})[\phi(\bar{a}),\phi(v)]=\phi(v)\phi(\bar{a})\lambda$.
Then $\psi(a,u,i)\psi(b,v,j)=\phi(u)\phi(v)\phi(\bar{a})\phi(\bar{b})\delta(c,w,k)=\phi(w)\phi(\bar{c})\delta(c,w,k)
=\psi(c,w,k)$. The second equality holds since $\{ \bar{a},\bar{b},\bar{c} \}$ and $\{ u,v,w \}$ are lines of $Y$. This completes the proof.
\end{proof}

\bigskip \noindent {\Large \textbf{Addresses:}}

\bigskip \noindent {\small \textsc{B. De Bruyn.} Department of Pure Mathematics and Computer Algebra, Ghent University, Krijglaan 281 (S22), B-9000 Gent, Belgium, \texttt{bdb@cage.ugent.be}}

\medskip \noindent {\small \textsc{B. K. Sahoo.} School of Mathematical Sciences, National Institute of Science Education and Research, Sainik School Post, Bhubaneswar-751005, Orissa, India, \texttt{bksahoo@niser.ac.in}}

\medskip \noindent {\small \textsc{N. S. N. Sastry.} Statistics and Mathematics Unit, Indian Statistical Institute, Mysore Road, R. V. College Post, Bangalore-560059, India, \texttt{nsastry@isibang.ac.in}}

\end{document}